\newcommand{\bs}[1]{\boldsymbol{#1}}
\def \ri {{\rm i}}
 \journalname{Journal of Scientific Computing}
\begin{document}

\title{Fast multipole method for the Laplace equation in half plane with Robin boundary condition\thanks{The first author was supported by Postgraduate Scientific Research Innovation Project of Hunan Province (No. CX20230512). The second author was  supported by NSFC (grant 12022104, 12371394) and the Major Program of Xiangjiang Laboratory (No.22XJ01013). The third author was  supported by NSFC (grant 12201603). The fourth author was  supported by  Clements Chair for this research. }}


\author{Chun zhi Xiang  \and Bo Wang\and Wenzhong Zhang\and Wei Cai
}


\institute{Chun zhi Xiang \at LCSM, Ministry of Education,  School of Mathematics and
Statistics, Hunan Normal University, Changsha, Hunan 410081, P. R. China. \\
\email{chunzhixiang@hunnu.edu.cn@163.com}           
\and
Bo Wang \at Corresponding author, LCSM, Ministry of Education, School of Mathematics and
Statistics, Hunan Normal University, Changsha, Hunan 410081, P. R. China.
Department of Mathematics, Southern Methodist University, Dallas, TX 75275. \\
\email{bowang@hunnu.edu.cn}
\and 
Wenzhong Zhang \at Suzhou Institute for Advanced Research, University of Science and Technology of China, Jiangsu 21500, P. R. China.\\
\email{wenzhongz@mail.smu.edu}   
\and  
Wei Cai \at Department of Mathematics, Southern
Methodist University, Dallas, TX 75275.\\
\email{cai@smu.edu}  }  

\date{Received: date / Accepted: date}

\maketitle

\begin{abstract}
In this paper, we present a fast multipole method (FMM) for solving the two-dimensional Laplace equation in a half-plane with Robin boundary conditions. The method is based on a novel expansion theory for the reaction component of the Green's function. By applying the Fourier transform, the reaction field component is obtained in a Sommerfeld-type integral form. We derive far-field approximations and corresponding shifting and translation operators from the Fourier integral representation. The FMM for the reaction component is then developed by using the new far-field approximations incorporated into the classic FMM framework in which the tree structure is constructed from the original and image charges. Combining this with the standard FMM for the free-space components, we develop a fast algorithm to compute the interaction of the half plane Laplace Green's function. We prove that the method exhibits exponential convergence, similar to the free-space FMM. Finally, numerical examples are presented to validate the theoretical results and demonstrate that the FMM achieves $O(N)$ computational complexity.
\keywords{Laplace equation\and Fast multipole method\and Half plane problem\and Robin boundary condition.}
\subclass{65D30 \and 65D32 \and 65R10 \and 41A60}
\end{abstract}

\section{Introduction}
The Laplace equation in half space $\mathbb R^2_+=\{\bs r=(x, y): y>0\}$ with Robin boundary condition on the boundary $y=0$, has a wide array of practical implications in engineering contexts. An important application is in water wave theory when the assumption of infinite depth is applied as long as the water is deep enough with respect to the wave height and length \cite{Hein1,Kuznetsov,appliquees2010greens,Perez}. The real-parameter Robin boundary condition offers a linearized depiction of time-harmonic gravity wave propagation across the surface of incompressible, inviscid, and irrotational fluids \cite{Wehausen}.  Robin boundary condition with complex-parameter will be employed when porous structures such as permeable breakwaters are considered \cite{Sollitt}. By allowing local perturbations on the half-plane, the model is extended to describe the scattering of small-amplitude water waves due to the presence of floating or submerged bodies. 
Other notable applications include the modeling of harmonic potentials within domains featuring uneven surfaces, the analysis of steady-state heat conduction employing linear convective boundary conditions, and the approximation of low-frequency sound waves and electromagnetic wave propagation on the ground \cite{chen1997review,dassios1999half}.

For numerically addressing the Laplace equation in a Robin half-plane,  the boundary integral method \cite{Mei,Perez,Yeung} has the advantages of dimension reduction and naturally imposing the radiation/decay condition. Nevertheless, a well-known limitation of the conventional boundary integral method resides in the dense linear system resulting from the discretization of the global boundary integral operator. Solving this linear system using standard methods can present computational challenges, especially in scenarios involving intricate or extensive boundaries. One strategy to circumvent this hindrance involves the fast multipole method (FMM), originally devised by Rokhlin \cite{Rokhlin1985} for the two-dimensional Laplace equation and subsequently refined by Greengard and Rokhlin for many-body issues \cite{Greengard1}. This method accelerates the dense matrix product vector, reduces data storage requirements, and decreases the computational cost from $O(N^2)$ to $O(N\log N)$ or $O(N)$. Over the past three decades, there has been a significant body of research focusing on the FMM \cite{widefmm,Darve1,darve2000fast,greengard1997new,Lu1,snyder12,Song1,Tausch1,bo2018taylorfmm,Ying1} and its applications on solving PDEs with boundary integral methods \cite{Fong1,Liu1}. 

The essence of the FMM is the far field approximation of the Green's function. Unlike the half-plane problem with Dirichlet or Neumann boundary conditions where a closed form of the Green's function can be obtained by simply applying image method, the Green's function for the Robin problem is usually given by more complicate expressions. Its far field approximation theory has not been established until the first work by Hien et al. \cite{Hein1,appliquees2010greens} in which a closed form of the Green's function has been derived. Based on this closed form, a FMM accelerated boundary element method (BEM) is developed to efficiently solve the half-plane problem \cite{Perez}. However, to the best of our knowledge, the closed form of the Green's function for 3-dimensional half space problem is still open and the road-map presented in \cite{Hein1,appliquees2010greens} is not available in handling half space problems. 

Recently, we have established a general framework to develop FMM for the Green's function of 3-dimensional Laplace, Helmholtz and modified Helmholtz equation in layered media \cite{wang2019fast,wangbo2021,wang2021fast,Zhang-Wang}. We have proposed a methodology to derive far field expansion theory from the Sommerfeld-type integral representation of the Green's functions in layered media.  The derivation is based on the expansions of the Fourier kernel and thus can be applied to a variety of linear PDEs whose Green's function can be obtained through Fourier transform. Moreover, the resultant expansions reduce to the spherical harmonic expansions used in the free space FMM when the layered medium is reduced to the homogeneous one. In all of our previous work, only transmission interface conditions are considered. Robin boundary condition will lead to singular density function in the Sommerfeld-type integral representation of the Green's function and therefore worthy further investigation.  


In this article, we develop an FMM for the Laplace equation in half-plane with Robin boundary condition. No closed form of the Green's function is required. We present a comprehensive derivation for the far field expansions and their shifting and translation operators of the Green's function directly from the Sommerfeld-type integral representation. Although the approximation theory used in our algorithm might be equivalent to that presented in \cite{Hein1,appliquees2010greens}, the derivation procedure is much simpler and different. More importantly, our framework is more general and can be extended to 3-dimensional half-space case naturally, which is  our on-going research. Exponential convergence of the far field expansions and their shifting and translation operators are proved. The result reveals an important fact that the convergence of the approximations used for the reaction field component depends on the distance between the target and the image source. This suggests how the fast multipole method (FMM) framework should be configured for sources and targets located in the half-plane.

The outline of the subsequent sections is as follows. In Section \ref{sec2}, we first present the derivation of the expansion theory for the reaction component of the Green’s functions of Laplace equation in the half-plane. Then, the FMM for the reaction components is developed. Together with the classic FMM for other free space components, a fast algorithm for the Laplace equation in the half-plane with Robin boundary condition is obtained. The exponential convergence of the FMM is proven in  Section \ref{section4}. The theoretical analysis shows that the FMM for the reaction component has better convergence as the rate depends on the distance between the original sources and the images of the targets which are always separated by the boundary $y=0$. Numerical experiments are provided in  Section \ref{section5} to validate our theoretical analysis and the $O(N)$ complexity of the FMM. Finally, we summarize this paper and discuss future work in  Section \ref{section6}.

\section{The FMM for Laplace equation in half-plane}\label{sec2}
In this section, we first present mathematical expansions for the far-field approximation of the reaction components in the Green’s function for the two-dimensional (2-D) Laplace equation in a half-plane domain. Subsequently, we introduce the FMM for the Green’s function of the 2-D Laplace equation in a half-plane, utilizing the framework of FMM in layered media.
\subsection{The Green's function for Laplace equation in half-plane}\label{section2}
We focus on the radiation problem of linear time-harmonic surface waves in the half-plane $\mathbb{R}_+^2$, originating from a fixed source point $\bs r' \in \mathbb{R}_+^2$, as illustrated in Fig. \ref{wave}. The Green's function discussed herein corresponds to the solution of this problem. Our analysis considers two distinct cases of wave propagation. The first case involves dissipative wave propagation, described by the Green function that incorporates dissipation. In contrast, the second case pertains to non-dissipative wave propagation, represented by the Green function without dissipation. To ensure correct physical results, the calculations must be conducted in the sense of the limiting absorption principle, which states that the Green's function without dissipation is the limit of the Green's function with dissipation as the dissipation parameter $\varepsilon$ approaches zero.
\begin{figure}[h!]
	\centering
	\includegraphics[width=0.75\textwidth]{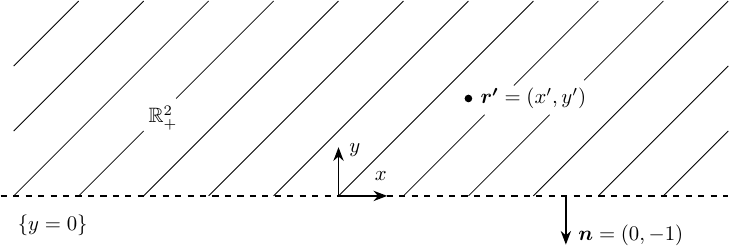}
	\caption{Domain of the Green's function of the half-plane problem.}
	\label{wave}
\end{figure} 

The Green's function for the half-plane problem in the two-dimensional Laplace equation (taking the upper half-plane as an example) satisfies the following conditions:
\begin{eqnarray}
	\label{0.0}	\bs{\Delta}G(\bs r, \bs r')=-\delta(\bs r-\bs r'), \quad y>0,\label{helmholtztotal}\\
	\label{1.0}	\mathscr{B}G=0,\quad y=0,\\
	\label{1.1} |G| \leq \frac{C}{{r}}, \quad \left|\frac{\partial G}{\partial r}\right| \leq \frac{C}{r^{2}}, \quad r \to \infty,
\end{eqnarray}
where
\begin{equation}
	\mathscr{B}=\mathcal I,\;\; \frac{\partial}{\partial\bs n},\;\;{\rm or}\;\;\frac{\partial}{\partial\bs n}-Z_{\varepsilon},
\end{equation}
are boundary operators regarding Dirichlet, Neumann, and Robin boundary conditions, respectively. Here, $\bs r=(x,y)$, $\bs r'=(x',y')$, $r=|\bs r|$, and $C$ is a positive constant. The outward normal vector $\bs n$ is specified as the negative direction of the $y-$axis. The notation $Z_{\varepsilon}=Z+\ri \varepsilon$ denotes a complex impedance that corresponds to dissipative wave propagation. Moreover, $\varepsilon>0$ is a small dissipation parameter and $Z>0$. If $Z_{\varepsilon}=Z$, this indicates a real impedance, which is associated with non-dissipative wave propagation.
The inequalities in \eqref{1.1} represent the outgoing radiation condition at infinity related to dissipative wave propagation. For non-dissipative wave propagation, an outgoing radiation condition at infinity was introduced in reference \cite{Hein1}. This condition, as $r \to \infty $, is expressed as follows,
\begin{align}
	|G| \leq \frac{C}{{r}} \quad &\text {and}\quad \left|\frac{\partial G}{\partial r}\right| \leq \frac{C}{r^{2}} \quad  &\text {if} \quad y>\frac{1}{Z} \ln (1+Z \pi r), \label{c1.1}\\
	|G| \leq C \quad &\text {and} \quad \left|\frac{\partial G}{\partial r}-i Z G\right| \leq \frac{C}{r} \quad &\text {if}\quad y<\frac{1}{Z} \ln (1+Z \pi r),\label{c1.2}
\end{align}	
for some constants $C>0$.

By applying Fourier transform in the $x$-direction and the idea of images, we can obtain expressions for the Green's functions associated with different boundary conditions as follows: 
\begin{itemize}
	\item Dirichlet and Neumann boundary conditions
	\begin{eqnarray}
		\label{ddd1.2}G(\bs r, \bs r')=-\dfrac{1}{2\pi}\ln|\bs r-\bs r'|\pm\dfrac{1}{2\pi}\ln|\bs r-{\bs r}'_{\rm im}|,
	\end{eqnarray}
	\item Robin boundary condition
	\begin{equation}\label{ddGreensfunimp}
		G(\bs r, \bs r')=-\dfrac{1}{2\pi}\ln|\bs r-\bs r'|+\dfrac{1}{2\pi}\ln|\bs r-{\bs r}'_{\rm im}|
		+\frac{1}{2\pi}\int_{-\infty}^{+\infty}\frac{e^{\ri \lambda(x-x')-|\lambda|(y+y')}}{|\lambda|-Z_{\varepsilon}}d\lambda,
	\end{equation}
\end{itemize}
where
\({\bs r}'_{\rm im}=(x', -y')\) is the image of \(\bs r'\) with respect to the \(x\)-axis. Apparently, the Green's function has symmetry 
\begin{equation}\label{greensfunsymm}
	G(\bs r, \bs r')=G(\bs r', \bs r),
\end{equation}
for all three types of boundary conditions. In the Robin boundary condition case, only the third term in  \eqref{ddGreensfunimp} depends on the impedance $Z_{\varepsilon}$. In this paper, we will present a fast multipole method for fast computation of the interactions induced by the Green's function \eqref{ddGreensfunimp}. As the classic FMM can be applied to calculate the interactions induced by the logarithm terms, we will focus on the theory and fast algorithms for the computation of the interaction governed by the integral term, which we denoted by
\begin{equation}
	\label{dzGreensfunimp} 
	G_{Z_{\varepsilon}}(\bs r, \bs r'):=\frac{1}{2\pi}\int_{-\infty}^{+\infty}\frac{e^{\ri \lambda(x-x')-|\lambda|(y+y')}}{|\lambda|-Z_{\varepsilon}}d\lambda.
\end{equation}
The approximation theory and fast algorithm presented in this paper is also available for the non-dissipative case, i.e., $\varepsilon=0$. In this case, the integral \eqref{dzGreensfunimp} involves two simple poles $\lambda=\pm Z$ and should be understood using the limit absorption principle, i.e.,
\begin{equation}\label{dzGreensfunimplossless}
	G_{Z_{0}}(\bs r, \bs r')=\lim\limits_{\varepsilon\rightarrow 0}\frac{1}{2\pi}\int_{-\infty}^{+\infty}\frac{e^{\ri \lambda(x-x')-|\lambda|(y+y')}}{|\lambda|-Z_{\varepsilon}}d\lambda.
\end{equation}
Define integrals
\begin{equation}\label{generalintegral}
	\mathcal I_{n}(x, y)=\frac{1}{2\pi n!}\int_{0}^{+\infty}\frac{e^{-\lambda y+\ri\lambda x}\lambda^{n}}{\lambda-Z_{\varepsilon}}d\lambda,\quad n=0, 1, \cdots.
\end{equation}
It is able to avoid the absolute value of $\lambda$ in the definition of $G_{Z_{\varepsilon}}(\bs r,\bs r')$ by rewriting it as
\begin{equation}\label{reactdecomposition}
	G_{Z_{\varepsilon}}(\bs r,\bs r')=\mathcal I_0(x-x', y+y')+\mathcal I_0(x'-x,  y+y').
\end{equation}

Hein et al. in \cite{Hein1} derived the following explicit expressions
\begin{align}
	G_{Z_{\varepsilon}}(\bs r, \bs r')=&-\frac{e^{-Z_{\varepsilon}(y+y')}}{2\pi}\left\{
	e^{\ri Z_{\varepsilon}(x-x')}{\rm{Ei}}\Big(Z_{\varepsilon}\big((y+y')-\ri(x-x')\big)\Big)\right.\nonumber\\
	&\left.+e^{-\ri Z_{\varepsilon}(x-x')}{\rm{Ei}}\Big(Z_{\varepsilon}\big((y+y')+\ri(x-x')\big)\Big)	
	\right\}, \quad \varepsilon>0,\nonumber
\end{align}	
and
\begin{align}
	G_{Z_{0}}(\bs r, \bs r')=&\lim\limits_{\varepsilon\rightarrow 0}G_{Z_{\varepsilon}}(\bs r, \bs r')
	=-\frac{e^{-Z(y+y')}}{2\pi}\left\{
	e^{\ri Z(x-x')}{\rm{Ei}}\Big(Z\big((y+y')-\ri(x-x')\big)\Big)\right.\nonumber\\
	&\left.+e^{-\ri Z(x-x')}{\rm{Ei}}\Big(Z\big((y+y')+\ri(x-x')\big)\Big)	\right\}\nonumber\\
	&+\ri e^{-Z(y+y')}\cos(Z(x-x')),\nonumber
\end{align}	
where 
\begin{align}\label{aaaaa1.1}
	{\rm {Ei}}(z)=-\int_{-z}^{\infty} \frac{e^{-t}}{t} dt, \quad z\ne0
\end{align}	
is the exponential integral function \cite{abramowitz1966handbook}. The exponential integral function ${\rm {Ei}}(z)$ with complex argument $z$ can use any contour form $-z$ to $\infty$ which does not cross the negative real axis or pass through the origin. These explicit expressions are useful to analyze the behavior of $G_{Z_{\varepsilon}}(\bs r,\bs r')$. However, it is complicate to establish far-field approximation theory from them. We will directly work on the integral form \eqref{dzGreensfunimp} and \eqref{dzGreensfunimplossless}. 



\subsection{Fast multipole method}\label{subsection2.11}
Let $\{(Q_{j},\boldsymbol{r}_{j}),$ $j=1,2,\cdots
,N\}$ be a large number of charged particles in the half plane, where $Q_j$ and $\boldsymbol{r}_{j}\in\mathbb R_+^2$ are the charge and coordinates of the $j$-th particle respectively. The potential of the interaction at any points $\bs r_i$ is given by the summation
\begin{equation}\label{potential1}
	\Phi(\boldsymbol{r}_{i})=\sum\limits_{j=1}^{N}Q_{j}G(\bs r_{i},\bs r_{j})
	:=\Phi^{\rm free}(\boldsymbol{r}_{i})+\Phi_1(\boldsymbol{r}_{i})+\Phi_2(\boldsymbol{r}_{i}),
\end{equation}
where
\begin{equation}\label{1.2}
	\Phi^{\rm free}(\boldsymbol{r}_{i})=-\frac{1}{2\pi}\sum\limits_{j=1,j\neq i}^{N}Q_{j}\ln(|\bs r_{ i}-\bs r_{j}|),
\end{equation}
is the free space component and 
\begin{equation}\label{reactioncomp1}	
	\Phi_1(\boldsymbol{r}_{i})=\frac{1}{2\pi}\sum\limits_{j=1}^{N}Q_{j}\ln(|{\bs r}_{ i}-\bs r_{j}^{\rm im}|),\quad \Phi_2(\boldsymbol{r}_{i})=\sum\limits_{j=1}^{N}Q_{j}G_{Z_{\varepsilon}}(\bs r_{ i},\bs r_{j}),
\end{equation}
are the reaction field components. The image sources are given by 
\begin{equation}
	\bs r_j^{\rm im}=(x_j, -y_j), \quad j=1, 2, \cdots, N.
\end{equation}
The classic FMM can be applied to compute $\{\Phi^{\rm free}(\boldsymbol{r}_{i})\}_{i=1}^N$ and $\{\Phi_1(\boldsymbol{r}_{i})\}_{i=1}^N$,  efficiently. Thus, we will first develop a FMM for efficient computation of the reaction component $\{\Phi_2(\boldsymbol{r}_{i})\}_{i=1}^N$. Then, a fast algorithm for the computation of the total potential $\{\Phi(\boldsymbol{r}_{i})\}_{i=1}^N$ can be made.

By the symmetry \eqref{greensfunsymm} and \eqref{reactdecomposition}, the reaction potential $\Phi_2(\boldsymbol{r}_{i})$ can be decomposed into the summation of
\begin{equation}\label{reactioncomp2}
	\Phi_2^{\pm}(\boldsymbol{r}_{i})=\sum\limits_{j=1}^{N}Q_{j}\mathcal I_0(\pm (x_{i}-x_{j}), y_{i}- y_{j}^{\rm im}),
\end{equation}
with $y_{j}^{\rm im}=-y_j$  namely,
\begin{equation}
	\Phi_2(\boldsymbol{r}_{i})=\Phi_2^{+}(\boldsymbol{r}_{i})+\Phi_2^{-}(\boldsymbol{r}_{i}).
\end{equation}
Therefore, we only need to focus on the FMM for $\Phi_2^{+}(\bs r_i)$, since $\Phi_2^{-}(\bs r_i)$ can be calculated similarly as
in the algorithm for $\Phi_2^+(\bs r_i)$.

It is well known that the mathematical foundation of the FMM is the theory of the multipole and local expansions together with their shift and translation operators. Next, we will present these formulas for the reaction components $\Phi_2^{+}(\bs r_i)$. The key ingredient to derive the expansion theory is the following theorem whose proof will be given in Appendix A.
\begin{theorem}\label{proposition1}
	Given two points $\bs r=(x, y)\in\mathbb R^2_+,\ \bs r'=(x', y')\in\mathbb R^2_-$, $Z_{\varepsilon}=Z+\ri \varepsilon$ where $Z>0$, $\varepsilon > 0$, then
	\begin{equation}\label{reactfieldME}
		\mathcal I_0(x-x', y-y')=\sum\limits_{n=0}^{\infty}\ri^{-n}(x'+\ri y')^n\mathcal I_{n}(x,y),  
	\end{equation}
	holds for $|\bs r|>|\bs r'|$, and
	\begin{equation}\label{reactfieldLE}
		\mathcal I_0(x-x', y-y')=\sum\limits_{n=0}^{\infty}\ri^n(x+\ri y)^n\mathcal I_{n}(-x', -y'),
	\end{equation}
	holds for $|\bs r|<|\bs r'|$.
\end{theorem}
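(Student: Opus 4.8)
The plan is to prove the multipole expansion \eqref{reactfieldME} directly and then read off the local expansion \eqref{reactfieldLE} from it by a symmetry substitution. For \eqref{reactfieldME} I start from the definition of $\mathcal I_0$ with arguments $X=x-x'$ and $Y=y-y'$, and split the exponent as $-\lambda(y-y')+\ri\lambda(x-x')=(-\lambda y+\ri\lambda x)+(\lambda y'-\ri\lambda x')$. The first group reproduces the integrand of $\mathcal I_n(x,y)$, while the second is exactly $-\ri\lambda(x'+\ri y')$, so that $e^{\lambda y'-\ri\lambda x'}=e^{-\ri\lambda(x'+\ri y')}=\sum_{n=0}^{\infty}\frac{(-\ri)^n\lambda^n(x'+\ri y')^n}{n!}$. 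Inserting this Taylor series and integrating term by term, together with $(-\ri)^n=\ri^{-n}$, yields precisely $\sum_{n}\ri^{-n}(x'+\ri y')^n\mathcal I_n(x,y)$. Thus the whole content of the theorem reduces to justifying the interchange of summation and integration and pinning down the sharp region $|\bs r|>|\bs r'|$.

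The interchange is the crux, and the \emph{sharp} region is what makes it delicate. A naive termwise estimate using $|\lambda-Z_{\varepsilon}|^{-1}\le\varepsilon^{-1}$ and $\int_0^\infty\lambda^n e^{-\lambda y}\,d\lambda=n!/y^{n+1}$ gives $\sum_n \frac{|x'+\ri y'|^n}{n!}\cdot\frac{1}{\varepsilon}\cdot\frac{n!}{y^{n+1}}$, which converges only for $|x'+\ri y'|<y$. This is strictly smaller than $|\bs r'|<|\bs r|$, and the gap is genuine: taking absolute values discards the oscillatory factor $e^{\ri\lambda x}$, whose phase is exactly what produces the true decay governed by $\rho=|\bs r|=|y-\ri x|$ rather than by $y$.

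To recover the sharp region I would argue by analytic continuation. Regard $w'=x'+\ri y'$ as a complex variable and set $F(w')=\frac{1}{2\pi}\int_0^\infty\frac{e^{-\lambda(y-\ri x)}e^{-\ri\lambda w'}}{\lambda-Z_{\varepsilon}}\,d\lambda$, which equals $\mathcal I_0(x-x',y-y')$ at physical $w'$. Since $|e^{-\ri\lambda w'}|=e^{\lambda\,\mathrm{Im}\,w'}$, the integral converges and is holomorphic in $w'$ on the half-plane $\{\mathrm{Im}\,w'<y\}$; on the subdisk $\{|w'|<y\}$ the naive bound is valid, so there $F$ equals the power series $\sum_n\ri^{-n}\mathcal I_n(x,y)\,(w')^n$. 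Writing the exponent as $-\lambda((y-\ri x)+\ri w')$ and comparing with the closed form in terms of the exponential integral $\mathrm{Ei}$ recorded just above \eqref{aaaaa1.1}, the integral continues to a holomorphic function of $w'$ on the whole plane except for the logarithmic branch point of $\mathrm{Ei}$, which occurs exactly when $(y-\ri x)+\ri w'=0$, i.e. at $w'=x+\ri y$, a point at distance $\rho=|\bs r|$ from the origin. Hence $F$ is holomorphic on the full disk $\{|w'|<\rho\}$, its Taylor series about $0$ has radius of convergence $\rho$, and by the identity theorem the series representation extends from $\{|w'|<y\}$ to all of $\{|w'|<\rho\}$. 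Restricting to physical $w'=x'+\ri y'$ with $|\bs r'|=|w'|<\rho=|\bs r|$ gives \eqref{reactfieldME}. (Equivalently, one may rotate the contour $\lambda\in(0,\infty)$ onto the ray where $\lambda(y-\ri x)$ is real positive, so that $|e^{-\lambda(y-\ri x)}|=e^{-\rho|\lambda|}$ and the termwise bound directly yields $\sum_n(|\bs r'|/\rho)^n$; this works too, but it requires indenting around the pole $\lambda=Z_{\varepsilon}$ when the rotation sweeps over it, producing a residue term that is itself entire in $w'$ and hence harmless.)

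Finally, \eqref{reactfieldLE} follows from \eqref{reactfieldME} without repeating the analysis: the substitution $\bs r\mapsto-\bs r'=(-x',-y')\in\mathbb R^2_+$ and $\bs r'\mapsto-\bs r=(-x,-y)\in\mathbb R^2_-$ leaves $\mathcal I_0(x-x',y-y')$ invariant and turns the hypothesis $|\bs r|<|\bs r'|$ into $|-\bs r'|>|-\bs r|$. Applying \eqref{reactfieldME} in these variables and simplifying $\ri^{-n}(-(x+\ri y))^n=\ri^{n}(x+\ri y)^n$ reproduces \eqref{reactfieldLE}. \textbf{The main obstacle} throughout is the second paragraph's gap: the rigorous passage from the easily established disk $|w'|<y$ to the full disk $|w'|<\rho$, where the oscillation must be converted into genuine decay, either by analytic continuation (locating the branch point at $w'=x+\ri y$) or by a contour rotation with careful treatment of the pole.
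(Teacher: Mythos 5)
Your proposal is correct in substance, but your primary route is genuinely different from the paper's. The paper substitutes $t=(y-\ri x)\lambda$, turning the integral into one over the ray $S=\{(y-\ri x)\lambda:\lambda\ge 0\}$, and then deforms $S$ onto the positive real axis with a four-case analysis on the location of the pole $P=Z_{\varepsilon}(y-\ri x)$ ($x=0$; $x<0$; $x>0$ with $\mathfrak{Im}(P)>0$, $=0$, or $<0$, the last two requiring a semicircular indentation $\Gamma_{\delta}$ or a full circle $C_{\delta}$ around the pole). On the real axis $e^{-t}$ supplies the genuine decay, Fubini is justified with exactly the ratio bound $\sum_n(|\bs r'|/|\bs r|)^n$ that you identify as the crux, and each term's contour is rotated back to $S$ to recover $\mathcal I_n(x,y)$ --- this is precisely your parenthetical alternative, pole treatment included. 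Your main argument instead obtains the sharp disk by analytic continuation in $w'=x'+\ri y'$, using the closed-form $\mathrm{Ei}$ representation to locate the branch point of $F$ at $w'=x+\ri y$, at distance $|\bs r|$ from the origin; combined with the elementary identification of the Taylor coefficients on the small disk $\{|w'|<y\}$ and the identity theorem, this is shorter and avoids the paper's case analysis entirely, and your reduction of \eqref{reactfieldLE} to \eqref{reactfieldME} via $(\bs r,\bs r')\mapsto(-\bs r',-\bs r)$ is cleaner than the paper's ``proved similarly.'' Two caveats. First, tighten the branch bookkeeping: when $\arg Z_{\varepsilon}$ is large and $x<0$, the principal cut of $\mathrm{Ei}(Z_{\varepsilon}\zeta)$ with $\zeta=(y-\ri x)+\ri w'$ can enter the image of the disk $\{|w'|<|\bs r|\}$, so you should argue that the union of $\{\mathfrak{Im}\,w'<y\}$ (where $F$ is manifestly holomorphic) and this disk is simply connected and omits the branch point, whence a single-valued holomorphic continuation exists and the radius of convergence is at least $|\bs r|$, which suffices. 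Second, a methodological trade-off: the paper deliberately works only with the Sommerfeld integral because its program is to extend the derivation to the 3-D half-space problem, where no closed form of the Green's function is known; your continuation argument leans on the $\mathrm{Ei}$ formula, so within that program only your contour-rotation variant --- i.e., the paper's own proof --- would generalize, while your analytic-continuation route buys brevity in 2-D at the price of that dependence.
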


\begin{remark}
	From the proof of Theorem \ref{proposition1}, we can see that the convergence of the series is uniform w.r.t $\varepsilon>0$. Therefore, the expansions in Theorem \ref{proposition1} also hold for non-dissipative case as we have mentioned before.
\end{remark}

\begin{remark}
	The expansion theory derived in this section is a natural extension of that for free space Green's function. Actually, the reaction field of the half space problem with Dirichlet boundary condition also has integral representation
	\begin{equation}\label{halfspacereactfield}
		-\frac{1}{2\pi}\ln(|\bs r-\tilde{\bs r}'|)=2\mathfrak{Re}\Big[\frac{1}{4\pi}\int_{0}^{\infty}\frac{e^{-\lambda(y+y')}}{\lambda}e^{\ri \lambda (x-x')}d\lambda\Big],
	\end{equation}
	where $\bs r, \bs r'\in \mathbb R^2_+$ and $\tilde{\bs r}'=(x', -y')$ is the image coordinates of $\bs r'$ with respect to $y=0$. By the multipole expansion of $\ln(|\bs r-\tilde{\bs r}'|)$, we have
	\begin{equation}\label{logexpansion}
		-\frac{1}{2\pi}\ln(|\bs r-\tilde{\bs r}'|)=-\frac{1}{2\pi}\mathfrak{Re}\Big[\ln z-\sum\limits_{n=1}^{\infty}\frac{1}{n}\Big(\frac{\tilde{z}'}{z}\Big)^n\Big],
	\end{equation}
	where $z=x+\ri y$, $\tilde{z}'=x'-\ri y'$ are complex numbers corresponding to the coordinates. 
	On the other hand, applying \eqref{reactfieldME} to the integral representation \eqref{halfspacereactfield}, we obtain
	\begin{equation}\label{integralexpansion}
		-\frac{1}{2\pi}\ln(|\bs r-\tilde{\bs r}'|)
		=\mathfrak{Re}\Big[\sum\limits_{n=0}^{\infty}\frac{\tilde{z}'^n}{2\pi \ri^n n!}\int_{0}^{\infty}e^{-\lambda y+\ri \lambda x}\lambda^{n-1}d\lambda\Big].
	\end{equation}
	Note that
	\begin{equation}
		2\mathfrak{Re}\Big[\frac{1}{4\pi}\int_{0}^{\infty}\frac{e^{-\lambda y+\ri \lambda x}}{\lambda}d\lambda\Big]=\frac{1}{4\pi}\int_{-\infty}^{\infty}\frac{e^{-|\lambda| y+\ri \lambda x}}{|\lambda|}d\lambda=-\frac{1}{2\pi}\mathfrak{Re}\ln z.
	\end{equation}
	Moreover, by Cauchy theorem and the definition of Gamma function, we have
	\begin{equation}\label{aaa1.2}
		\int_{0}^{\infty}e^{-\lambda y+\ri \lambda x}\lambda^{n-1}d\lambda=\frac{1}{(y-\ri x)^n}\int_{C}e^{-z}z^{n-1}dz=\frac{\ri^n\Gamma(n)}{z^n},\quad n\geq 1,
	\end{equation}
	where the contour is defined as $C:=\{z=(y-\ri x)\lambda| \lambda\in[0, \infty)\}$. Substituting the above two identities into \eqref{integralexpansion}, we obtain exactly the classic expansion \eqref{logexpansion}. 
\end{remark}
\begin{figure}[h!]
	\centering
	\includegraphics[width=0.61\textwidth]{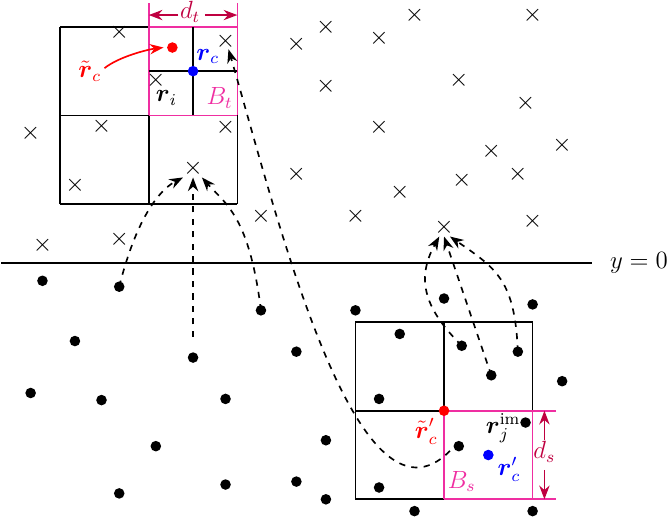}
	\caption{Targets (``$\times$"), images of the sources (``$\bullet$") and boxes in the FMM tree structure.}
	\label{field}
\end{figure} 

All far field approximations and their shifting and translation operators used in the FMM will be derived using the expansions in the Theorem \ref{proposition1}. The formulas used in the FMM for $\{\Phi_2^+(\bs r_i)\}_{i=1}^N$ are derived as follows:
\medskip
\begin{itemize}
	\item {\bf Source-to-Multipole (S2M):} Let $B_s$ be a box of size $d_s$ in $\mathbb R^2_-$ centered at $\bs r_c'=(x_c', y_c')$, $\bs r_i$ is any target coordinates in $\mathbb R^2_+$ such that $|\bs r_i-\bs r_c'|>\frac{\sqrt{2}}{2}d_s$, see Fig. \ref{field} for an illustration. 
	We consider the multipole expansion of the potential due to image sources inside $B_s$ at points $\bs r_j^{\rm im}$, i.e.,
	\begin{equation}\label{localpotential}
		\Phi_{2,B_s}^+(\bs r_i):=\sum\limits_{\bs r_j^{\rm im}\in B_s}Q_{j}\mathcal I_0(x_{i}-x_{j}, y_{i}- y_{ j}^{\rm im}).
	\end{equation}
	We firt add the center $\bs r_c'$ into the integral as follows
	\begin{equation}
		\mathcal I_0(x_i-x_j, y_i-y_j^{\rm im})=\mathcal I_0(x_c'-x_j-(x_c'-x_i), y_c'-y_j^{\rm im}-(y_c'-y_i)),
	\end{equation}
	and then apply the expansion \eqref{reactfieldLE} which gives us
	\begin{equation}\label{MEformula}
		\Phi_{2,B_s}^+(\bs r_i)=\sum\limits_{n=0}^{\infty}\alpha_{n}\mathcal I_n(x_i-x_c', y_i-y_c'),
	\end{equation}
	where
	\begin{equation}\label{MEcoefficients}
		\begin{split}
			\alpha_{n}=\sum\limits_{\bs r_j^{\rm im}\in B_s}Q_j\ri^{n}[(x_c'-x_j)+\ri (y_c'-y_j^{\rm im})]^n.
		\end{split}
	\end{equation}
	In the FMM, the truncated expansion
	\begin{equation}\label{truncatedME}
		\Phi_{ME}^{p}(\bs r_i)=\sum\limits_{n=0}^{p}\alpha_{n}\mathcal I_n(x_i-x_c', y_i-y_c'),
	\end{equation}
	is used as far field approximation for $\Phi_{2,B_s}^+(\bs r_i)$ where $p$ is the truncation order. 
	\medskip
	
	\item {\bf Multipole-to-Multipole (M2M):} Let $\tilde{\bs r}_c'=(\tilde x_c', \tilde y_c')$ be the center of the parent box of $B_s$ and further assume $\bs r_i$ satisfies $|\bs r_i-\tilde{\bs r}_c'|>\max\limits_{\bs r_j^{\rm im}\in B_s} |\bs r_j^{\rm im}-\tilde{\bs r}_c'|$. Apparently, we have multipole expansion
	\begin{equation}\label{M2M0}
		\Phi_{2,B_s}^+(\bs r_i)=\sum\limits_{n=0}^{\infty}\tilde \alpha_{n}\mathcal I_n(x_i-\tilde x'_c, y_i-\tilde y_c'),
	\end{equation}
	with respect to the center $\tilde{\bs r}_c'$ , where
	\begin{equation}
		\tilde\alpha_{n}=\sum\limits_{\bs r_j^{\rm im}\in B_s}Q_j\ri^{n}[(\tilde x'_c-x_j)+\ri(\tilde y'_c-y_j^{\rm im})]^n.
	\end{equation}
	By binomial formula and \eqref{MEcoefficients}, we have
	\begin{equation}\label{metome}
		\begin{split}
			\tilde\alpha_{n}=&\sum\limits_{\bs r_j^{\rm im}\in B_s}Q_j\ri^{n}[(\tilde x'_c-x_c'+x_c'-x_j)+\ri(\tilde y_c'-y_c'+y_c'-y_j^{\rm im})]^n\\
			=&\sum\limits_{m=0}^n\sum\limits_{\bs r_j^{\rm im}\in B_s}Q_j\frac{\ri^nn![(x'_c-x_j)+\ri (y_c'-y_j^{\rm im})]^m}{(n-m)!m![(\tilde x'_c-x_c')+\ri(\tilde y'_c-y_c')]^{m-n}}\\
			=&\sum\limits_{m=0}^n\frac{n!\ri^{n-m}}{(n-m)!m!}[(\tilde x'_c-x_c')+\ri(\tilde y'_c-y_c')]^{n-m}\alpha_m.
		\end{split}
	\end{equation}
	This is exactly the M2M shifting operator used in the classic FMM for free space problems. With the truncated ME \eqref{truncatedME}, we can exactly calculate the truncated version of \eqref{M2M0} which we denoted by
	\begin{equation}\label{truncatedM2M}
		\widetilde{\Phi}_{ME}^{p}(\bs r_i)=\sum\limits_{n=0}^{p}\tilde{\alpha}_{n}\mathcal I_n(x_i-\tilde x_c', y_i-\tilde y_c').
	\end{equation}
	
	\item {\bf Source-to-Local (S2L):} Let $B_t$ be a target box of size $d_t$ in $\mathbb R^2_+$ centered at $\bs r_c=(x_c, y_c)$. Suppose all image sources $\bs r_j^{\rm im}$ in $B_s$ satisfy $\min\limits_{\bs r_j^{\rm im}\in B_s}|\bs r_j^{\rm im}-{\bs r}_c|>\frac{\sqrt{2}}{2}d_t$. Then, the potential due to the image sources inside $B_s$ at any points $\bs r_i\in B_t$ has expansion 
	\begin{equation}\label{LEformula}
		\Phi_{2,B_s}^+(\bs r_i)=\sum\limits_{n=0}^{\infty}\beta_{n}[(x_c-x_i)+\ri (y_c-y_i)]^n,
	\end{equation}
	where
	\begin{equation}\label{LEcoefficients}
		\beta_{n}=\sum\limits_{\bs r_j^{\rm im}\in B_s}Q_j\ri^{-n}\mathcal I_n(x_c-x_j, y_c-y_j^{\rm im}).
	\end{equation}
	The truncated expansion
	\begin{equation}\label{truncatedLE}
		\Phi_{LE}^{p}(\bs r_i)=\sum\limits_{n=0}^{p}\beta_{n}[(x_c-x_i)+\ri (y_c-y_i)]^n,
	\end{equation}
	is the so called local expansion for $\Phi_{2,B_s}^+(\bs r_i)$ used in the FMM. 
	\medskip
	
	\item {\bf Local-to-Local (L2L):} Further assume $\tilde{\bs r}_c=(\tilde x_c, \tilde y_c)$ be the center of a child box of $B_t$ and $\min\limits_{\bs r_j^{\rm im}\in B_s}|\bs r_j^{\rm im}-\tilde{\bs r}_c|>\frac{\sqrt{2}}{4}d_t$. By the local expansion in \eqref{LEformula} and binomial formula, we have
	\begin{equation}\label{L2L0}
		\begin{split}
			\Phi_{2,B_s}^+(\bs r_i)
			=&\sum\limits_{n=0}^{\infty}\beta_{n}[(x_c-\tilde x_c+\tilde x_c-x_i)+\ri(y_c -\tilde y_c+\tilde y_c-y_i)]^n\\		=&\sum\limits_{n=0}^{\infty}\beta_{n}\sum\limits_{m=0}^n\frac{n![(\tilde x_c-x_i)+\ri(\tilde y_c-y_i)]^m}{(n-m)!m![(x_c-\tilde x_c)+\ri ( y_c-\tilde y_c)]^{m-n}}\\
			=&\sum\limits_{m=0}^{\infty}\sum\limits_{n=m}^{\infty}\beta_{n}\frac{n![(\tilde x_c-x_i)+\ri(\tilde y_c-y_i)]^m}{(n-m)!m![(x_c-\tilde  x_c)+\ri (y_c-\tilde  y_c)]^{m-n}}\\
			=&\sum\limits_{m=0}^{\infty}\tilde\beta_m[(\tilde x_c-x_i)+\ri(\tilde y_c-y_i)]^m,
		\end{split}
	\end{equation}
	where 
	\begin{equation}\label{letole1}
		\tilde\beta_m=\sum\limits_{n=m}^{\infty}\frac{n![( x_c-\tilde x_c)+\ri(y_c-\tilde y_c) ]^{n-m}}{(n-m)!m!}\beta_{n}.
	\end{equation}
	This is again the exact L2L formulation used in the classic FMM for free space problems. In the implementation of the FMM, given a truncated LE \eqref{truncatedLE}, \eqref{letole1} has to be truncated to $n=p$. Then, the L2L shifting produce the following approximation
	\begin{equation}\label{truncatedL2L}
		\widetilde{\Phi}_{LE}^p(\bs r_i) =\sum\limits_{m=0}^{p}\breve\beta_m[(\tilde x_c-x_i)+\ri(\tilde y_c-y_i)]^m,
	\end{equation}
	where 
	$$\breve\beta_m=\sum\limits_{n=m}^{p}\frac{n![( x_c-\tilde x_c)+\ri(y_c-\tilde y_c) ]^{n-m}}{(n-m)!m!}\beta_{n}.$$

	\item {\bf Multipole-to-Local (M2L):} Suppose the aforementioned source and target boxes $B_s$ and $B_t$ satisfy $|\bs r_c-\bs r_c'|>\frac{\sqrt{2}(d_s+d_t)}{2}$, where $d_s, d_t$ are the size of $B_s$ and $B_t$, respectively. Then, for any $\bs r_i$ in $B_t$, we have  multipole expansion
	\begin{equation*}
		\Phi_{2,B_s}^+(\bs r_i)=\sum\limits_{n=0}^{\infty}\alpha_{n}\mathcal I_n(x_i-x_c+x_c-x'_c, y_i-y_c+y_c-y_c'),
	\end{equation*}
	where $\{\alpha_n\}_{n=0}^{\infty}$ are given by \eqref{MEcoefficients}. Applying expansion \eqref{reactfieldME} again, we obtain
	\begin{equation*}
		\Phi_{2,B_s}^+(\bs r_i) =\sum\limits_{m=0}^{\infty}\sum\limits_{n=0}^{\infty}\frac{\alpha_{n}\ri^{-m}(n+m)!\mathcal I_{n+m}( x_c-x_c',y_c-y_c')}{n!m![(x_c-x_i)+\ri(y_c-y_i)]^{-m}}.
	\end{equation*}
	Comparing with the local expansion \eqref{LEformula} gives
	\begin{equation}\label{metoleimage}
		\beta_m=\sum\limits_{n=0}^{\infty}\frac{\ri^{-m}(n+m)!}{ n!m!}\mathcal I_{n+m}(x_c- x'_c,y_c-y'_c)\alpha_{n}.
	\end{equation}
	Similar as in the L2L shifting, given a truncated LE \eqref{truncatedLE}, \eqref{metoleimage} has to be truncated to $n=p$. Then, the M2L translation produce the following approximation
	\begin{equation}\label{truncatedM2L}
		{\Phi}_{M2L}^{p}(\bs r_i) =\sum\limits_{n=0}^{p}\hat{\beta}_{n}[(x_c-x_i)+\ri (y_c-y_i)]^n,
	\end{equation}
	where 
	\begin{equation}\label{truncatedM2Lcoef}
		\hat{\beta}_{n}=\sum\limits_{m=0}^{p}\frac{\ri^{-n}(n+m)!}{ n!m!}\mathcal I_{n+m}(x_c- x'_c,y_c-y'_c)\alpha_{m}.
	\end{equation}
\end{itemize}

Using the truncated expansions, shifting and translation operators in the framework of the classic FMM, we implement an FMM for fast calculation of $\{\Phi_2^+(\bs r_i)\}_{i=1}^N$ at any desired accuracy. The pseudo-code of the algorithm is presented in Algorithm \ref{algorithm1} and the overall FMM for the computation of the interactions \eqref{potential1} is presented in Algorithm \ref{algorithm2}.
\begin{figure}[h!]
	\centering
	\includegraphics[width=0.6\textwidth]{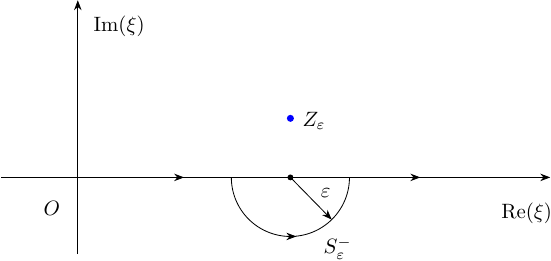}
	\caption{The integration path $L_{\varepsilon}$ for lossless scenario.}
	\label{newcontour}
\end{figure}

The FMM for the reaction components requires an efficient algorithm to compute the integrals $\mathcal I_n(x,y)$. By
\begin{equation*}
	\frac{\lambda^n}{\lambda-z}=\lambda^{n-1}+z\frac{\lambda^{n-1}}{\lambda-z},\quad n=1, 2, \cdots,
\end{equation*} 
we have recursion
\begin{equation}\label{integralrecursion}
	\begin{split}
		\mathcal I_n(x,y)=&
		\frac{1}{2\pi n!}\int_{0}^{\infty}e^{-\lambda y+\ri \lambda x}\left(\lambda^{n-1}+Z_{\varepsilon}\frac{\lambda^{n-1}}{\lambda-Z_{\varepsilon}}\right)d\lambda\\
		=&\frac{1}{2\pi n(y-\ri x)^{n}}+\frac{Z_{\varepsilon}}{n}\mathcal I_{n-1}(x, y).
	\end{split}
\end{equation}
The initial value $\mathcal I_0(x,y)=-\frac{1}{2\pi}e^{-Z_{\varepsilon}\left( y+\ri x \right)}{\rm{Ei}}\left( Z_{\varepsilon}\left( y+\ri x \right) \right)$ where ${\rm{Ei}}(z)$ is the exponential integral defined in \eqref{aaaaa1.1}. Efficient implementations for the computation of  ${\rm{Ei}}(z)$ can be obtained in many well-known packages. Therefore, the translation operator \eqref{metoleimage} can be calculated very efficiently using the recurrence formula \eqref{integralrecursion}.
\begin{algorithm}[H]
	\caption[Algorithm 1]{The FMM for reaction component $\Phi_2^{+}(\boldsymbol{r}_{i}), i=1, 2, \cdots, N$}
	\label{algorithm1}
	\begin{algorithmic}
		\State{Generate image coordinates $\bs r_j^{\rm im}$ for all sources $\bs r_j$.}
		\State{Generate an adaptive hierarchical tree structure with target points $\{\bs r_i\}_{i=1}^N$ and image source points $\{\bs r_j^{\rm im}\}_{j=1}^N$ and pre-compute some tables.}
		\State{\bf Upward pass:}
		\For{$\ell=H \to 0$}
		\For{ all boxes $j$ on source tree level $\ell$}
		\If{$j$ is a leaf node}
		\State{form ME using Eq. \eqref{MEcoefficients}.}
		\Else
		\State{form ME by merging children's MEs using shifting operator \eqref{metome}.} 
		\EndIf
		\EndFor
		\EndFor
		\State{\bf Downward pass:}
		\For{$\ell=1 \to H$}
		\For{all boxes $j$ on target tree level $\ell$}
		\State{shift the LE of $j$'s parent to $j$ itself using shifting operator \eqref{letole1}.}
		\State{collect interaction list contribution using M2L translation operator \eqref{metoleimage}.}
		\EndFor
		\EndFor
		\State {\bf Evaluate Local expansions:}
		\For{each leaf node (childless box)}
		\State{evaluate the local expansion at each particle location using \eqref{LEformula}.} 
		\EndFor
		\State {\bf Local Direct Interactions:}
		\For{$i=1 \to N$ }
		\State{compute \eqref{reactioncomp2} of target particle $i$ in the neighboring boxes.}
		\EndFor
	\end{algorithmic}
\end{algorithm}
\begin{algorithm}[H]
	\caption[Algorithm2]{The overall FMM for the interactions $\Phi(\bs r_{i}),i=1, 2, \cdots,N$}
	\label{algorithm2}
	\begin{algorithmic}
		\State compute $\{\Phi^{\rm free}(\bs r_{i})\}_{i=1}^N$  using free space FMM.
		\State compute $\{\Phi_1(\bs r_{i})\}_{i=1}^N$ using free space FMM with image sources at $\{\bs r_j^{\rm im}\}_{j=1}^N$ and targets at $\{{\bs r}_i\}_{i=1}^N$.
		\State compute $\{\Phi_2^+(\bs r_{i})\}_{i=1}^N$  using Algorithm \ref{algorithm1} with image sources at $\{\bs r_j^{\rm im}\}_{j=1}^N$ and targets at $\{\bs r_{i}\}_{i=1}^N$.
		\State compute $\{\Phi_2^-(\bs r_{i})\}_{i=1}^N$ using Algorithm \ref{algorithm1} with image sources at $\{(-x_j, y_j^{\rm im})\}_{j=1}^N$ and targets at $\{(-x_i, y_i)\}_{i=1}^N$.
		\State sum all the components together to obtain $\{\Phi(\bs r_i)\}_{i=1}^N$.
	\end{algorithmic}
\end{algorithm}

For the scenario without dissipation, the limit absorption principle should be used when taking the limit $\varepsilon\rightarrow 0$ on both sides of \eqref{integralrecursion}, i.e., the path for the integrals should be deformed to $L_{\varepsilon}$ as depicted in Fig. \ref{newcontour}. Therefore, we still have the recurrence formula \eqref{integralrecursion} for the lossless case while the initial value is given by
\begin{equation*}  \label{aaa1.5}
	\begin{split}
		\lim\limits_{\varepsilon\rightarrow 0}{\rm{Ei}}(Z_{\varepsilon}(y+\ri x))
		=&-e^{Z(y+\ri x)}\lim\limits_{\varepsilon\rightarrow 0}\int_{L_{\varepsilon}}\frac{e^{-\xi(y+\ri x)}}{\xi-Z_{\varepsilon}}d\xi\\
		=&-e^{Z(y+\ri x)}\lim\limits_{\varepsilon\rightarrow 0}\left[\left(\int_{0}^{Z-\varepsilon}+\int_{Z+\varepsilon}^{\infty}+\int_{S^-_{\varepsilon}}\right)\frac{e^{-\xi(y+\ri x)}}{\xi-Z_{\varepsilon}}d\xi \right]\\
		=&-e^{Z(y+\ri x)}\left[ {\rm p.v.}\int_{0}^{\infty}\frac{e^{-\xi(y+\ri x)}}{\xi-Z}d\xi+\ri \pi e^{-Z(y+\ri x)} \right]\\
		=&-e^{Z(y+\ri x)}\left[-e^{-Z(y+\ri x)}{\rm{Ei}}(Z(y+\ri x))+\ri \pi e^{-Z(y+\ri x)}\right].
	\end{split}
\end{equation*}

\section{Error analysis for the FMM}\label{section4}
In this section, an error estimate of the FMM for the reaction component $\{\Phi_2^{\pm}(\bs r_i)\}_{i=1}^N$ is established. We prove that the FMM for the reaction component enjoys a similar exponential convergence as the classic FMM for free space case, except the convergence rates are determined by the Euclidean distance between the image and source points. 
Before estimating the error, it is essential to state the following key estimate.
\begin{lemma}\label{lemma1}
	Given any complex number $z=a+{\rm i} b$, we have
	\begin{equation}\label{aaaaaa1.0}
		\left|\int_0^{+\infty}\frac{e^{-t}t^{n}}{t-z}dt\right|\le C\max\{|z|^2, 1\}(n-1)!
	\end{equation}
	for $n\geq |z|e$, where $C$ is a constant independent of $z$ and $n$. The contour is set to be $\Gamma_1\cup \Gamma_3$ as presented in \eqref{f1.12} to get rid of the pole $t=z$ in the case $a>0$, $b=0$. 
\end{lemma}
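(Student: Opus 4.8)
The plan is to exploit that the weight $e^{-t}t^{n}$ is strongly peaked at $t=n$, while the hypothesis $n\ge |z|e$ pushes the pole $t=z$ well to the left of that peak, into a region where $e^{-t}t^{n}$ is comparatively negligible. I would first remove the pole algebraically. From the finite geometric identity
$$\frac{t^{n}}{t-z}=\sum_{k=0}^{n-1}z^{k}t^{\,n-1-k}+\frac{z^{n}}{t-z},$$
integrating against $e^{-t}$ and using $\int_{0}^{\infty}e^{-t}t^{\,n-1-k}\,dt=(n-1-k)!$ gives
$$\int_{0}^{\infty}\frac{e^{-t}t^{n}}{t-z}\,dt=\sum_{k=0}^{n-1}z^{k}(n-1-k)!+z^{n}\int_{0}^{\infty}\frac{e^{-t}}{t-z}\,dt .$$
The polynomial part is entire in $t$, hence insensitive to the pole and integrable along the real axis; the entire singularity is now concentrated in the last term, whose integral is the exponential integral $\int_{0}^{\infty}e^{-t}/(t-z)\,dt=-e^{-z}\mathrm{Ei}(z)$ of the type already used in the paper. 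Only this term requires the indented contour $\Gamma_{1}\cup\Gamma_{3}$, and only in the case $a>0$, $b=0$; otherwise the path $[0,\infty)$ misses the pole.

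Next I would show the polynomial sum is $O((n-1)!)$. Bounding $\big|\sum_{k=0}^{n-1}z^{k}(n-1-k)!\big|\le\sum_{k=0}^{n-1}|z|^{k}(n-1-k)!$, the $k=0$ term equals $(n-1)!$ and the ratio of consecutive terms, $|z|/(n-1-k)$, is $\le (n/e)/(n/2-1)<1$ for $k\le n/2$; hence the leading terms decay geometrically and sum to at most a fixed multiple of $(n-1)!$. A Stirling estimate together with $|z|\le n/e$ shows that the terms near and beyond the minimal one (located at $k\approx n-1-|z|$) are exponentially smaller than $(n-1)!$ and thus contribute negligibly. Hence this part is $O((n-1)!)\le C\max\{|z|^{2},1\}(n-1)!$.

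It remains to estimate $z^{n}\int_{0}^{\infty}e^{-t}/(t-z)\,dt=-z^{n}e^{-z}\mathrm{Ei}(z)$. Splitting the integral at $t=2|z|$, on the tail $t\ge 2|z|$ one has $|t-z|\ge t/2$, which makes that piece $O(1/|z|)$; on $[0,2|z|]$ I would deform onto $\Gamma_{1}\cup\Gamma_{3}$ around the pole, so that the principal value of $e^{-z}/(t-z)$ over the symmetric interval $[0,2|z|]$ cancels and only the integrable difference quotient $(e^{-t}-e^{-z})/(t-z)$ (which is bounded) and a residue $\propto e^{-z}$ remain. This yields $\big|\int_{0}^{\infty}e^{-t}/(t-z)\,dt\big|\le C|z|$ for $|z|\ge1$, and $\le C|\ln|z||$ as $z\to0$. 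Consequently
$$\Big|z^{n}\!\int_{0}^{\infty}\frac{e^{-t}}{t-z}\,dt\Big|\le C|z|^{\,n+1}=C|z|^{2}\,|z|^{\,n-1},$$
and since $|z|\le n/e$ gives $|z|^{\,n-1}\le (n/(n-1))^{\,n-1}(n-1)!\le e\,(n-1)!$, this term is $\le C\max\{|z|^{2},1\}(n-1)!$. Here the factor $|z|^{2}$ is exactly the product of the extra $|z|$ from the near-pole bound and the $|z|$ split off from $|z|^{n}$, while the ``$1$'' covers $|z|\le1$, where $|z|^{n}|\ln|z||$ stays bounded. Adding the two contributions proves the estimate with $C$ independent of $z$ and $n$.

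The \emph{main obstacle} is the near-pole analysis just described: obtaining a bound uniform in $z$ up to and on the positive real axis, where the real-line integral is genuinely singular --- this is precisely what forces the indented contour $\Gamma_{1}\cup\Gamma_{3}$ --- and then carrying out the Stirling bookkeeping that collapses the resulting factorial and exponential expressions to at most $\max\{|z|^{2},1\}(n-1)!$. The delicate regime is $|z|\sim n/e$, where quantities such as $|z|^{\,n-1}$ and $e^{-z}|z|^{n}$ are only barely dominated by $(n-1)!$, so the constant $C$ must be chosen to absorb this worst case. By contrast, the tail estimate and the geometric part of the polynomial sum are routine.
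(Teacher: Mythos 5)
Your proposal is correct and takes essentially the same approach as the paper's Appendix~B proof: the same finite geometric identity isolating $z^{n}/(t-z)$, the same Stirling-type control of the factorial sum $\sum_{k}|z|^{k}(n-1-k)!$, and the same $O(|z|+1)$-type bound on $\int_{0}^{+\infty}e^{-t}/(t-z)\,dt$ obtained by a contour indented around the pole (the paper splits at $t=a+1$ and detours along a rectangular path $\Gamma_1\cup\Gamma_2\cup\Gamma_3$, while you split at $t=2|z|$ and use a difference-quotient/principal-value argument --- a purely cosmetic difference). One small remark: your claim that the terms near and beyond the minimal one are ``exponentially smaller'' than $(n-1)!$ is an overstatement in the critical regime $|z|\sim n/e$, where $|z|^{n-1}$ is only a bounded multiple of $(n-1)!$, but as you yourself note later and as your final bound $|z|^{n-1}\le e\,(n-1)!$ shows, this does not affect the validity of the estimate.
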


This lemma can be thought of as an extension of the identity
$$(n-1)!=\Gamma(n):=\int_0^{\infty}e^{-t}t^{n-1}dt,$$
and the proof will be given in Appendix B. 

\begin{lemma}\label{theorem1}
	Given a point $\bs r=(x,y)\in\mathbb{R}_+^2$ and complex number $Z_{\varepsilon}=Z+\ri\varepsilon$ in the first quadrant, i.e., $Z>0$, $\varepsilon> 0$, there exists a positive constant $C$ such that for $n\in \mathbb{N}$, $n\geq e|Z_{\varepsilon}||\bs r|$, we have
	\begin{equation}\label{aa1.0}
		|\mathcal I_n(x, y)|\le \frac{C\max\{|Z_{\varepsilon}|^2|\bs r|^2, 1\}}{n|\bs r|^n}.
	\end{equation}
\end{lemma}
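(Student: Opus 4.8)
The plan is to reduce the estimate to Lemma \ref{lemma1} by a single complex change of variables that turns the decaying-oscillatory kernel $e^{-\lambda y+\ri\lambda x}$ into the pure factor $e^{-t}$. Writing $w=y-\ri x$, so that $\mathrm{Re}\,w=y>0$ and $|w|=|\bs r|$, the definition \eqref{generalintegral} reads
$$\mathcal I_n(x,y)=\frac{1}{2\pi n!}\int_0^{+\infty}\frac{e^{-\lambda w}\lambda^n}{\lambda-Z_{\varepsilon}}\,d\lambda.$$
Substituting $t=\lambda w$ maps the positive $\lambda$-axis onto the ray $C=\{se^{\ri\arg w}:s\ge0\}$, and since $\lambda^n=t^n/w^n$ and $\lambda-Z_{\varepsilon}=(t-wZ_{\varepsilon})/w$, I obtain
$$\mathcal I_n(x,y)=\frac{1}{2\pi n!\,w^n}\int_{C}\frac{e^{-t}t^n}{t-z}\,dt,\qquad z:=wZ_{\varepsilon},\quad |z|=|Z_{\varepsilon}|\,|\bs r|.$$
The crucial point is that the hypothesis $n\ge e|Z_{\varepsilon}|\,|\bs r|=e|z|$ is exactly the range in which Lemma \ref{lemma1} applies to the argument $z$.

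Next I would rotate the ray $C$ back onto the positive real axis $[0,+\infty)$ on which Lemma \ref{lemma1} is stated. Because $\arg w\in(-\pi/2,\pi/2)$, the closing circular arc of radius $R$ stays inside the sector $\{|\arg t|<\pi/2\}$, where $\mathrm{Re}\,t\ge|t|\cos(\arg w)>0$; hence $|e^{-t}t^n|$ decays exponentially and the arc contribution vanishes as $R\to\infty$. By Cauchy's theorem the deformation therefore gives
$$\int_{C}\frac{e^{-t}t^n}{t-z}\,dt=\int_0^{+\infty}\frac{e^{-t}t^n}{t-z}\,dt+2\pi\ri\,m\,e^{-z}z^n,$$
where $m\in\{-1,0,1\}$ records whether the pole $t=z$ is swept across (when $z$ is real and positive the target path is understood as the indented contour $\Gamma_1\cup\Gamma_3$ of Lemma \ref{lemma1}). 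The observation that makes the residue harmless is geometric: $m\neq0$ only when $z$ lies in the sector between $C$ and $[0,+\infty)$, which is contained in $\{|\arg t|<\pi/2\}$, so whenever the residue is present one has $\mathrm{Re}\,z>0$ and $|e^{-z}z^n|\le|z|^n$.

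It then remains to estimate the two pieces. Applying Lemma \ref{lemma1} to the first integral and using $|w|=|\bs r|$ together with $n!=n(n-1)!$ gives
$$\frac{1}{2\pi n!\,|w|^n}\left|\int_0^{+\infty}\frac{e^{-t}t^n}{t-z}\,dt\right|\le\frac{C\max\{|Z_{\varepsilon}|^2|\bs r|^2,1\}}{2\pi n|\bs r|^n},$$
which is already of the desired form. For the residue piece, dividing $2\pi|z|^n$ by $2\pi n!\,|w|^n$ leaves $|Z_{\varepsilon}|^n/n!$, and I would prove $\tfrac{n(|Z_{\varepsilon}|\,|\bs r|)^n}{n!}\le C\max\{|Z_{\varepsilon}|^2|\bs r|^2,1\}$ for $n\ge e|Z_{\varepsilon}|\,|\bs r|$: writing $a=|Z_{\varepsilon}|\,|\bs r|$, the quantity $na^n/n!$ is decreasing in $n$ for $n\ge ea$ (the consecutive ratio is $a/n\le 1/e$), so its maximum is attained near $n=ea$, where Stirling's formula bounds it by $C\sqrt{a}\le C\max\{a^2,1\}$. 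Adding the two bounds and absorbing the numerical constants completes the proof.

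The main obstacle is the middle step: one must recognize that the genuine complex rotation of the contour can cross the pole $t=z$, and then control the resulting residue. The saving feature is twofold — the pole can only be crossed while it sits in the right half plane, so that $|e^{-z}|\le1$, and the Stirling estimate keeps the residue within the same envelope $\tfrac{1}{n|\bs r|^n}$ as the principal term. Everything else is a direct application of Lemma \ref{lemma1} and elementary bookkeeping.
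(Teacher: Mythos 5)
Your proposal is correct and follows essentially the same route as the paper's proof: the substitution $t=\lambda(y-\ri x)$, deformation of the ray $S=\{(y-\ri x)\lambda\}$ to the positive real axis with vanishing arcs, application of Lemma \ref{lemma1}, and control of the crossed pole's residue via $\mathfrak{Re}\,z>0$ and a Stirling-type bound. The only differences are organizational — you package the paper's four explicit cases ($x=0$; $x<0$; $x>0$ with $\mathfrak{Im}(P)>0$, $=0$, $<0$, cf. Fig.~\ref{f1.4}) into a single sector/winding-number argument, and you bound the residue through the monotonicity of $na^n/n!$ rather than the paper's inequality $|P|^n|e^{-P}|\le (n-1)!\,|P|$ in \eqref{f1.18}, both of which rest on the same Stirling estimate \eqref{f1.71}.
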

\begin{proof}
	Applying variable substitution $t=(y-\ri x)\lambda$, we have
	\begin{equation}
		\mathcal I_n(x, y)=\frac{1}{2\pi n!}\int_{0}^{+\infty}\frac{e^{-\lambda y+\ri \lambda x}\lambda^n}{\lambda-Z_{\varepsilon}}d\lambda=\frac{1}{2\pi n!(y-\ri x)^n}\int_{S}\frac{t^{n}e^{-t}}{t-Z_{\varepsilon}(y-\ri x)}dt,
	\end{equation}
	where $S=\big\{t=(y-\ri x)\lambda|\lambda\in[0,+\infty) \big\}$, see Fig. \ref{f1.4} for a sketch.
	Then, the estimate can be obtained by changing the contour $S$ to the real line and then applying the estimate in Lemma \ref{lemma1}. The contour deformation depends on the position of $P=Z_{\varepsilon}(y-\ri x)$. 
	
	If $x=0$, the contour $S$ is the real line, i.e.,
	\begin{equation}
		\int_{S}\frac{e^{-t}t^{n}}{t-P}dt
		=\int_{0}^{+\infty}\frac{e^{-t}t^{n}}{t-Z_{\varepsilon}y}dt,
	\end{equation}
	where $P=Z_{\varepsilon}y$ locates in the first quadrant due to the assumption $Z>0, \varepsilon>0$ and $y>0$. 
	
	If $x<0$, the contour $S$ is in the first quadrant and the inequality
	\begin{equation}
		\frac{\mathfrak{Im}(P)}{\mathfrak{Re}(P)}=\frac{-x+\frac{\varepsilon y}{Z}}{y+\frac{\varepsilon x}{Z}}>\frac{-x}{y},\quad {\rm if}\;\;y+\frac{\varepsilon x}{Z}>0,
	\end{equation}
	shows that the point given by $P$ is located above the contour $S$ or in the left complex plane, see Fig. \ref{f1.4} (a). Therefore, we can apply Cauchy's theorem to change the contour from $S_R=\big\{t=(y-\ri x)\lambda:\lambda\in[0,R] \big\}$ to $\Gamma_{R,1}\cup \Gamma^+_{R,2}$, where
	\begin{align}\label{f1.3}
		\Gamma_{R,1}=\left\{t:0\le t \le R\right\},\quad
		\Gamma^+_{R,2}=\left\{t=Re^{\ri \theta} :0\le \theta \le \arctan \Big(-\frac{x}{y}\Big)\right\}\nonumber.
	\end{align}
	For the integral along $\Gamma^+_{R,2}$, the assumption $y>0$ implies that
	\begin{align}
		\lim\limits_{R\rightarrow+\infty}\Big|\int_{\Gamma^+_{R,2}}\frac{e^{-t}t^{n}}{t-P}dt\Big|
		=&\lim\limits_{R\rightarrow+\infty}\Big|\int_{0}^{\arctan \big(-\frac{x}{y} \big)}\frac{e^{-Re^{\ri \theta}}(Re^{\ri \theta})^nRe^{\ri \theta}\ri}{Re^{\ri \theta}-P}d\theta\Big|\\
		\leq&\lim\limits_{R\rightarrow+\infty}e^{-\frac{Ry}{\sqrt{x^2+y^2}}}R^{n}\int_{0}^{\arctan \big(-\frac{x}{y}\big)}\frac{1}{|e^{\ri\theta}-P/R|} d\theta=0.\nonumber
	\end{align}
	Therefore, the Cauchy theorem gives
	\begin{equation}\label{m1.2}
		\int_{S}\frac{e^{-t}t^{n}}{t-P}dt
		=\lim\limits_{R\rightarrow+\infty}\int_{\Gamma_{R,1}}\frac{e^{-t}t^{n}}{t-P}dt=\int_{0}^{+\infty}\frac{e^{-t}t^{n}}{t-P}dt,
	\end{equation}
	where $\mathfrak{Im}(P)=\varepsilon y-Zx>0$.
	
	If $x>0$, the contour $S$ is lies in the fourth quadrant and the imaginary part $\mathfrak{Im}(P)=\varepsilon y-Zx$ can be any real number. If $\mathfrak{Im}(P)>0$, the contour is illustrated in Fig. \ref{f1.4} (b), we can directly change the contour to the real line and obtain \eqref{m1.2} with $P$ located in the first quadrant. If $\mathfrak{Im}(P)\leq 0$, the contour changes are depicted in Fig. \ref{f1.4} (c) (d). 
	We can also check that, the integral along contour
	\begin{equation}\label{contournegative1}
		\Gamma_{R, 2}^-=\left\{t=Re^{\ri \theta} :\theta_{xy}:=\arctan \Big(-\frac{x}{y}\Big)\le \theta \le 0\right\}
	\end{equation}
	tends to $0$ as $R\rightarrow+\infty$. 
	Therefore, we can always have
	\begin{equation}\label{f1.11}
		\int_{S}\frac{e^{-t}t^{n}}{t-P}dt
		=\lim\limits_{R\rightarrow+\infty}\int_{\Gamma_{\delta,1}}\frac{e^{-t}t^{n}}{t-P}dt+\int_{\Gamma_{\delta}}\frac{e^{-t}t^{n}}{t-P}dt,
	\end{equation}
	where
	\begin{equation*}
		\Gamma_{\delta,1}=[0, \mathfrak{Re}(P)-\delta]\cup [\mathfrak{Re}(P)+\delta, R],\quad
		\Gamma_{\delta}=\left\{t=\delta e^{\ri \theta}+\mathfrak{Re}(P) :-\pi\le \theta \le 0 \right\},
	\end{equation*}
	for $\mathfrak{Im}(P)=0$, or 
	\begin{equation}\label{f1.16}
		\begin{split}
			\Gamma_{\delta,1}=\left\{t:0\le t\le R\right\},\quad
			\Gamma_{\delta}=C_{\delta}=\left\{t=\delta e^{\ri \theta}+P :0\le \theta \le 2\pi \right\},
		\end{split}
	\end{equation}
	for $\mathfrak{Im}(P)<0$. 
	
	Apparently, in all cases discussed above, we can directly apply Lemma \ref{lemma1} to obtain desired estimate for the integrals after contour change. One extra estimate for the integral along the contour $\Gamma_3$ is required in the proof of the case $\mathfrak{Im}(P) < 0$. Actually, direct calculation gives 
	\begin{equation}\label{f1.18}
		\begin{split}
			\left|\int_{\Gamma_{\delta}}\frac{e^{-t}t^{n}}{t-P}dt\right|
			=&\left|\lim\limits_{\delta\rightarrow0^+}\int_{0}^{2\pi}e^{-(\delta e^{\ri \theta}+P)}(\delta e^{\ri \theta}+P)^{n}\ri d\theta\right|\\
			=&\left|2\pi\ri e^{-P}P^{n}\right|
			=2\pi |P|^{n}|e^{-P}|\le2\pi (n-1)!|P|.
		\end{split} 
	\end{equation}	
\end{proof}

In the adaptive FMM, the approximations \eqref{truncatedME},\eqref{truncatedLE},\eqref{truncatedM2M},\eqref{truncatedL2L} and \eqref{truncatedM2L} could be used separately or together to generate approximations for $\Phi_{2,B_s}^+(\bs r_i)$. Now, we will prove error estimates for these approximations one-by-one.

\begin{theorem}\label{theorem2}
	The ME given in \eqref{MEformula} possesses a truncation error estimate
	\begin{equation}\label{f1.8}
		\begin{split}
			&\left|\Phi_{2,B_s}^+(\bs r_i)-\sum\limits_{n=0}^{p}\alpha_{n}\mathcal I_n(x_i-x_c', y_i-y_c')\right|
			\le\frac{CC_{max}Q_{s}}{p+1}q^{p+1},
		\end{split}
	\end{equation}
	where 
	\begin{equation}\label{localcharge}
		Q_s=\sum\limits_{\bs r_j^{\rm im}\in B_s}|Q_j|,\quad q=\frac{\sqrt{2}d_s}{2|\bs r_i-\bs r'_{c}|}<1,\quad C_{max}=\frac{\max\{|Z_{\varepsilon}|^2|\bs r_i-\bs r_c'|^2, 1\}}{1-q},
	\end{equation}
	and $C$ is a positive generic constant.
\end{theorem}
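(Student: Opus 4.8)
The plan is to write the truncation error as the tail of the multipole series and then bound it term-by-term using the two ingredients already at hand: a geometric bound on the coefficients $\alpha_n$ coming from the box geometry, and the decay estimate for $\mathcal I_n$ from Lemma \ref{theorem1}. Since the full ME \eqref{MEformula} converges to $\Phi_{2,B_s}^+(\bs r_i)$, the quantity on the left of \eqref{f1.8} equals $\left|\sum_{n=p+1}^{\infty}\alpha_n\mathcal I_n(x_i-x_c', y_i-y_c')\right|$, so the whole task reduces to estimating this remainder.

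First I would bound the coefficients. Every image source $\bs r_j^{\rm im}$ lies in the box $B_s$ of size $d_s$ centered at $\bs r_c'$, so its distance to the center is at most the half-diagonal, $|\bs r_j^{\rm im}-\bs r_c'|=|(x_c'-x_j)+\ri(y_c'-y_j^{\rm im})|\le \tfrac{\sqrt2}{2}d_s$. Since $|\ri^n|=1$, formula \eqref{MEcoefficients} gives directly $|\alpha_n|\le \sum_{\bs r_j^{\rm im}\in B_s}|Q_j|\big(\tfrac{\sqrt2}{2}d_s\big)^n=Q_s\big(\tfrac{\sqrt2}{2}d_s\big)^n$. Next I would invoke Lemma \ref{theorem1} with $\bs r=(x_i-x_c',y_i-y_c')$, noting $|\bs r|=|\bs r_i-\bs r_c'|$, to obtain $|\mathcal I_n(x_i-x_c',y_i-y_c')|\le \frac{C\max\{|Z_\varepsilon|^2|\bs r_i-\bs r_c'|^2,1\}}{n|\bs r_i-\bs r_c'|^n}$ for all $n\ge e|Z_\varepsilon||\bs r_i-\bs r_c'|$.

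Multiplying the two bounds, the factors $\big(\tfrac{\sqrt2}{2}d_s\big)^n$ and $|\bs r_i-\bs r_c'|^{-n}$ combine into $q^n$ with $q=\frac{\sqrt2\, d_s}{2|\bs r_i-\bs r_c'|}$, the same $q<1$ fixed in \eqref{localcharge}. Writing $M=\max\{|Z_\varepsilon|^2|\bs r_i-\bs r_c'|^2,1\}$, the remainder is thereby bounded by $CMQ_s\sum_{n=p+1}^{\infty}\frac{q^n}{n}$. Finally I would estimate this scalar tail: using $\tfrac1n\le\tfrac{1}{p+1}$ for $n\ge p+1$ and summing the resulting geometric series, $\sum_{n=p+1}^{\infty}\frac{q^n}{n}\le\frac{1}{p+1}\sum_{n=p+1}^{\infty}q^n=\frac{1}{p+1}\cdot\frac{q^{p+1}}{1-q}$. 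Collecting constants yields exactly $\frac{CQ_s}{p+1}\cdot\frac{M}{1-q}\,q^{p+1}=\frac{CC_{max}Q_s}{p+1}q^{p+1}$ with $C_{max}=\frac{M}{1-q}$ as defined in \eqref{localcharge}.

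The only point requiring genuine care is the hypothesis $n\ge e|Z_\varepsilon||\bs r_i-\bs r_c'|$ needed to apply Lemma \ref{theorem1}: its bound on $\mathcal I_n$ holds only once $n$ is large enough, so the estimate \eqref{f1.8} is to be read for truncation orders $p$ with $p+1\ge e|Z_\varepsilon||\bs r_i-\bs r_c'|$, ensuring every term in the tail $n\ge p+1$ satisfies the Lemma's range (equivalently, the finitely many low-order terms are absorbed into the generic constant $C$). This is the main---and essentially the only---subtlety; the rest is a direct combination of the coefficient bound, the $\mathcal I_n$ estimate, and a geometric-series tail, so I do not expect any serious obstacle beyond bookkeeping.
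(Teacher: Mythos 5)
Your proposal is correct and follows essentially the same route as the paper's own proof: write the error as the series tail, bound $|\alpha_n|\le Q_s\bigl(\tfrac{\sqrt2}{2}d_s\bigr)^n$ from the box geometry, apply Lemma \ref{theorem1} to $\mathcal I_n$, and sum the resulting tail $\sum_{n\ge p+1}q^n/n\le \tfrac{1}{p+1}\tfrac{q^{p+1}}{1-q}$. You are in fact slightly more careful than the paper, which silently ignores the hypothesis $n\ge e|Z_\varepsilon||\bs r_i-\bs r_c'|$ of Lemma \ref{theorem1} that you correctly flag as restricting the range of $p$ (or as requiring the low-order terms to be handled separately).
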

\begin{proof}
	Using \eqref{generalintegral}, \eqref{MEformula}-\eqref{MEcoefficients} and Lemma \ref{theorem1}, together with the given conditions, we can obtain
	\begin{equation}\label{f1.1}
		\begin{split}
			&\left|\Phi_{2,B_s}^+(\bs r_i)-\sum\limits_{n=0}^{p}\alpha_{n}\mathcal I_n(x_i-x'_c, y_i-y_c')\right|\\
			=&\left|\sum\limits_{n=p+1}^{\infty}\left(\sum\limits_{\bs r_j^{\rm im}\in B_s}Q_j[(x'_c-x_j)+\ri (y_c'-y_j^{\rm im})]^n \right)\mathcal I_n(x_i-x'_c, y_i-y_c')\right|\\
			\leq&\sum\limits_{n=p+1}^{\infty}Q_s\Big(\frac{\sqrt{2}}{2}d_s\Big)^n \left|\mathcal I_n(x_i-x'_c, y_i-y_c')\right|\\
			\le&\sum\limits_{n=p+1}^{\infty}Q_s \left(\frac{\sqrt{2}d_s}{2|\bs r_i-\bs r_c'|}\right)^n\frac{C\max\{|Z_{\varepsilon}|^2|\bs r_i-\bs r_c'|^2, 1\}}{n}\\
			\le&\frac{CQ_s\max\{|Z_{\varepsilon}|^2|\bs r_i-\bs r_c'|^2, 1\}}{p+1}\frac{q^{p+1}}{1-q}.
		\end{split}
	\end{equation}
\end{proof}
\begin{theorem}\label{theorem3} 
	The LE given in \eqref{LEformula} possesses a truncation error estimate
	\begin{equation}\label{g1.2}
		\begin{split}
			&\left|\Phi_{2,B_s}^+(\bs r_i)-\sum\limits_{n=0}^{p}\beta_{n}[(x_c-x_i)+\ri (y_c-y_i)]^n\right|
			\le\frac{C\widetilde C_{max}Q_s}{p+1}\tilde{q}^{p+1},
		\end{split}
	\end{equation}
	where $Q_s$ is defined in \eqref{localcharge}, $$\tilde{q}=\frac{|\bs r_{i}-\bs r_{c}|}{\min\limits_{\bs r_j^{\rm im}\in B_s} |\bs r_j^{\rm im}-{\bs r}_c|}<1,\quad\widetilde C_{max}=\frac{\max\{|Z_{\varepsilon}|^2|\bs r_j^{\rm im}-\bs r_c|^2, 1\}}{1-\tilde q},$$
	and $C$ is a positive generic constant.
\end{theorem}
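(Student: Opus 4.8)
The plan is to mirror the argument used for the multipole estimate in Theorem \ref{theorem2}, but with the roles of the two distances interchanged. In the ME bound the convergence factor came from the source-box radius divided by the target-to-source-center distance; here, because the polynomial variable is $(x_c-x_i)+\ri(y_c-y_i)$ while the integrals $\mathcal I_n$ now carry the argument $(x_c-x_j,\,y_c-y_j^{\rm im})$, the small parameter is instead $\tilde q=|\bs r_i-\bs r_c|/\min_{\bs r_j^{\rm im}\in B_s}|\bs r_j^{\rm im}-\bs r_c|$. First I would subtract the truncated sum from the exact local expansion \eqref{LEformula} and write the error as the tail
\begin{equation*}
  \Phi_{2,B_s}^+(\bs r_i)-\sum_{n=0}^{p}\beta_n[(x_c-x_i)+\ri(y_c-y_i)]^n
  =\sum_{n=p+1}^{\infty}\beta_n[(x_c-x_i)+\ri(y_c-y_i)]^n.
\end{equation*}

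Next I would bound $|\beta_n|$ termwise. Substituting the definition \eqref{LEcoefficients} and applying the key estimate of Lemma \ref{theorem1} to each $\mathcal I_n(x_c-x_j,\,y_c-y_j^{\rm im})$ --- whose relevant Euclidean length is $|\bs r_j^{\rm im}-\bs r_c|$ --- gives
\begin{equation*}
  |\beta_n|\le\sum_{\bs r_j^{\rm im}\in B_s}|Q_j|\,
  \frac{C\max\{|Z_\varepsilon|^2|\bs r_j^{\rm im}-\bs r_c|^2,1\}}{n\,|\bs r_j^{\rm im}-\bs r_c|^n}.
\end{equation*}
Since every image source obeys $|\bs r_j^{\rm im}-\bs r_c|\ge\min_{\bs r_j^{\rm im}\in B_s}|\bs r_j^{\rm im}-\bs r_c|$, I replace the denominator by this minimum and pull the (finitely many) maxima out as a single constant, the largest of which is the numerator of $\widetilde C_{max}$; together with $Q_s=\sum|Q_j|$ this yields $|\beta_n|\le C\,Q_s\max\{\cdots\}/(n\,(\min|\bs r_j^{\rm im}-\bs r_c|)^n)$. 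Multiplying by $|(x_c-x_i)+\ri(y_c-y_i)|^n=|\bs r_i-\bs r_c|^n$ then produces exactly the factor $\tilde q^n$.

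Finally, I would sum the tail: bounding $1/n\le 1/(p+1)$ and recognizing the geometric series $\sum_{n=p+1}^\infty\tilde q^n=\tilde q^{p+1}/(1-\tilde q)$ absorbs the remaining $1/(1-\tilde q)$ into $\widetilde C_{max}$ and delivers the claimed bound $C\widetilde C_{max}Q_s\,\tilde q^{p+1}/(p+1)$. The two points needing care --- the main obstacle rather than any hard calculation --- are the hypotheses that make each step legitimate. The application of Lemma \ref{theorem1} is valid only for $n\ge e|Z_\varepsilon|\,|\bs r_j^{\rm im}-\bs r_c|$, so I must assume the truncation order $p$ exceeds $e|Z_\varepsilon|\max_{\bs r_j^{\rm im}\in B_s}|\bs r_j^{\rm im}-\bs r_c|$ for the termwise bound to hold across the entire tail; and the geometric sum converges precisely because $\tilde q<1$, which is guaranteed by the S2L separation condition $\min_{\bs r_j^{\rm im}\in B_s}|\bs r_j^{\rm im}-\bs r_c|>\tfrac{\sqrt2}{2}d_t$ together with $\bs r_i\in B_t$, giving $|\bs r_i-\bs r_c|\le\tfrac{\sqrt2}{2}d_t<\min|\bs r_j^{\rm im}-\bs r_c|$. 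Everything else is the routine bookkeeping already carried out in the proof of Theorem \ref{theorem2}.
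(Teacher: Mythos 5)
Your proposal is correct and follows essentially the same route as the paper's own proof of Theorem \ref{theorem3}: express the error as the tail $\sum_{n=p+1}^{\infty}\beta_n[(x_c-x_i)+\ri(y_c-y_i)]^n$, bound each $\mathcal I_n(x_c-x_j,\,y_c-y_j^{\rm im})$ via Lemma \ref{theorem1} with distance $|\bs r_j^{\rm im}-\bs r_c|$, replace that distance by its minimum over $B_s$, and sum the geometric series using $1/n\le 1/(p+1)$. Your added observation that Lemma \ref{theorem1} only applies for $n\ge e|Z_{\varepsilon}||\bs r_j^{\rm im}-\bs r_c|$, so the termwise tail bound tacitly requires $p+1\ge e|Z_{\varepsilon}|\max_{\bs r_j^{\rm im}\in B_s}|\bs r_j^{\rm im}-\bs r_c|$, is a hypothesis the paper leaves implicit rather than a defect in your argument.
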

\begin{proof}
	The truncation error can be obtained similarly by applying Lemma \ref{theorem1} as follows
	\begin{equation}\label{g1.1}
		\begin{split}
			&\left|\Phi_{2,B_s}^+(\bs r_i)-\sum\limits_{n=0}^{p}\beta_{n}[(x_c-x_i)+\ri (y_c-y_i)]^n\right|\\	=&\left|\sum\limits_{n=p+1}^{\infty}\left(\sum\limits_{\bs r_j^{\rm im}\in B_s}Q_j\ri^{-n}\mathcal I_n(x_c-x_j, y_c-y_j^{\rm im})\right) [(x_c-x_i)+\ri (y_c-y_i)]^n\right|\\
			\leq&\sum\limits_{n=p+1}^{\infty}\sum\limits_{\bs r_j^{\rm im}\in B_s}|Q_j|\left|\mathcal I_n(x_c-x_j, y_c-y_j^{\rm im})
			\right| |\bs r_i-\bs r_c|^n\\
			\le&\sum\limits_{n=p+1}^{\infty}\Big(\frac{|\bs r_i-\bs r_c|}{\min\limits_{\bs r_j^{\rm im}\in B_s} |\bs r_j^{\rm im}-{\bs r}_c|}\Big)^n \frac{CQ_s\max\{|Z_{\varepsilon}|^2|\bs r_j^{\rm im}-\bs r_c|^2, 1\}}{n}\\
			\le&\frac{CQ_s\max\{|Z_{\varepsilon}|^2|\bs r_j^{\rm im}-\bs r_c|^2, 1\}}{p+1}\frac{\tilde{q}^{p+1}}{1-\tilde{q}}.
		\end{split}
	\end{equation} 
\end{proof}	

The M2M shifting formulation \eqref{metome} shows that ME coefficients $\tilde\alpha_n$ can be calculated exactly from $\{\alpha_k\}_{k=0}^n$. Therefore, no additional error is introduced during the M2M shifting and the total error due to S2M and then M2M is equal to the error we do ME approximation directly at the new center $\tilde r_c'$. Therefore, the ME with coefficients $\tilde\alpha_n$ calculated from M2M shifting formulation \eqref{metome} has the following error estimate. 
\begin{theorem}\label{theorem4}
	The ME shifting formulation \eqref{M2M0} has a truncation error estimate
	\begin{equation}\label{ff1.8}
		\begin{split}
			&\left|\Phi_{2,B_s}^+(\bs r_i)-\sum\limits_{n=0}^{p}\tilde \alpha_{n}\mathcal I_n(x_i-\tilde x'_c, y_i-\tilde y_c')\right|
			\le\frac{CQ_s}{p+1}\frac{\hat{q}^{p+1}}{1-\hat{q}},
		\end{split}
	\end{equation}
	where $\hat{q}=\frac{\max\limits_{\bs r_j^{\rm im}\in B_s} |\bs r_j^{\rm im}-\tilde{\bs r}_c'|}{|\bs r_{i}-\bs{\tilde r}'_{c}|}<1$ and $C$ is a positive constant.
\end{theorem}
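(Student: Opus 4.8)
The plan is to exploit the algebraic exactness of the M2M operator \eqref{metome} so that the estimate reduces to a multipole-expansion truncation bound of the type already established in Theorem \ref{theorem2}. The crucial structural observation is that \eqref{metome} expresses each shifted coefficient $\tilde\alpha_n$ as a \emph{finite} sum involving only the lower-order coefficients $\alpha_0,\dots,\alpha_n$; i.e. the M2M map is lower triangular. Consequently, truncating the ME at order $p$ before shifting produces exactly the same coefficients $\tilde\alpha_0,\dots,\tilde\alpha_p$ as forming the shifted ME directly, and these in turn coincide with the coefficients obtained by expanding $\Phi_{2,B_s}^+$ about the parent center $\tilde{\bs r}_c'$ from scratch via \eqref{M2M0}. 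Hence no error is introduced by the shift itself, and the left-hand side of \eqref{ff1.8} is precisely the truncation error of the exact multipole expansion \eqref{M2M0} centered at $\tilde{\bs r}_c'$.

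With this reduction in hand, I would estimate the tail exactly as in the proof of Theorem \ref{theorem2}, but with $\bs r_c'$ replaced by $\tilde{\bs r}_c'$. First I bound the shifted coefficients by $|\tilde\alpha_n|\le Q_s M^n$, where $M=\max_{\bs r_j^{\rm im}\in B_s}|\bs r_j^{\rm im}-\tilde{\bs r}_c'|$, using the identity $|(\tilde x_c'-x_j)+\ri(\tilde y_c'-y_j^{\rm im})|=|\bs r_j^{\rm im}-\tilde{\bs r}_c'|$ together with the triangle inequality and $Q_s=\sum_{\bs r_j^{\rm im}\in B_s}|Q_j|$. Next I apply the kernel estimate of Lemma \ref{theorem1} to $\mathcal I_n(x_i-\tilde x_c',y_i-\tilde y_c')$, which for $n\ge e|Z_\varepsilon||\bs r_i-\tilde{\bs r}_c'|$ gives a bound of order $\max\{|Z_\varepsilon|^2|\bs r_i-\tilde{\bs r}_c'|^2,1\}/(n|\bs r_i-\tilde{\bs r}_c'|^n)$.

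Combining these, the tail $\sum_{n=p+1}^{\infty}\tilde\alpha_n\mathcal I_n(x_i-\tilde x_c',y_i-\tilde y_c')$ is dominated by a constant multiple of $Q_s\max\{|Z_\varepsilon|^2|\bs r_i-\tilde{\bs r}_c'|^2,1\}\sum_{n=p+1}^{\infty}\hat q^n/n$ with $\hat q=M/|\bs r_i-\tilde{\bs r}_c'|$. Since $\hat q<1$ by hypothesis and $1/n\le 1/(p+1)$ on the summation range, the geometric sum gives $\sum_{n=p+1}^{\infty}\hat q^n/n\le \hat q^{p+1}/((p+1)(1-\hat q))$, which yields the claimed estimate \eqref{ff1.8}, the factor $\max\{|Z_\varepsilon|^2|\bs r_i-\tilde{\bs r}_c'|^2,1\}$ being absorbed into the generic constant $C$ (mirroring how Theorem \ref{theorem2} keeps $C_{max}$ explicit while here it is folded in).

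The main obstacle, and really the only nontrivial point, is justifying the ``no extra error'' reduction rigorously. The danger is conflating the truncated shift $\sum_{n=0}^{p}\tilde\alpha_n\mathcal I_n(x_i-\tilde x_c',y_i-\tilde y_c')$ with a quantity that still carries the order-$p$ truncation of the merged data; one must verify that the lower-triangular structure of \eqref{metome} forces the finitely many coefficients $\tilde\alpha_0,\dots,\tilde\alpha_p$ to be computed exactly from $\alpha_0,\dots,\alpha_p$, with no contamination from the discarded tail $\{\alpha_m\}_{m>p}$. Once this commutativity of truncation and shifting is established, everything else is a direct transcription of the Theorem \ref{theorem2} argument with the enlarged radius $M$ and the parent center, requiring no new ideas.
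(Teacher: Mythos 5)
Your proposal is correct and follows essentially the same route as the paper: the paper's entire justification is precisely your key observation that the M2M map \eqref{metome} is lower triangular, so $\tilde\alpha_0,\dots,\tilde\alpha_p$ are computed exactly from $\alpha_0,\dots,\alpha_p$ with no contamination from the discarded tail, reducing the bound to the ME truncation estimate of Theorem \ref{theorem2} applied at the parent center $\tilde{\bs r}_c'$ with ratio $\hat q$. Your handling of the constant (absorbing $\max\{|Z_\varepsilon|^2|\bs r_i-\tilde{\bs r}_c'|^2,1\}$ into $C$) also matches the paper's statement of the theorem.
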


Observing the coefficient conversion formula \eqref{letole1} from local to local (L2L) translation, we note that truncation introduces errors into the formula. Therefore, we need to provide an analysis of the truncation error for the local expansion of local to local (L2L) translation.
\begin{theorem}\label{theorem5}
	The LE shifting formulation \eqref{L2L0} has a truncation error estimate
	\begin{equation}\label{fff1.8}
		\left|\Phi_{2,B_s}^+(\bs r_i)-\sum\limits_{m=0}^{p}\breve\beta_{m}[(\tilde x_c-x_i)+\ri(\tilde y_c-y_i) ]^{m}\right|
		\le\frac{CQ_s}{p+1}\frac{\tilde{q}^{p+1}}{1-\tilde{q}},
	\end{equation}
	where $\tilde{q}=\frac{|\bs r_{c}-\bs r_{i}|}{\min\limits_{\bs r_j^{\rm im}\in B_s} |\bs r_c-{\bs r}_j^{\rm im}|}<1$ and $C$ is a positive constant.
\end{theorem}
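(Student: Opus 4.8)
The plan is to reduce this statement to Theorem \ref{theorem3} by exploiting the fact that the L2L operator acts \emph{exactly} on polynomials, so that truncating and then shifting introduces no error beyond the one already present in the local expansion. Write $s=(x_c-\tilde x_c)+\ri(y_c-\tilde y_c)$ for the shift vector, $w=(\tilde x_c-x_i)+\ri(\tilde y_c-y_i)$ for the argument at the new center, and $z=(x_c-x_i)+\ri(y_c-y_i)=s+w$ for the argument at the old center. With this notation the truncated local expansion \eqref{truncatedLE} is the degree-$p$ polynomial $\sum_{n=0}^{p}\beta_n z^n$, while the L2L-shifted truncated expansion \eqref{truncatedL2L} is $\sum_{m=0}^{p}\breve\beta_m w^m$ with $\breve\beta_m=\sum_{n=m}^{p}\binom{n}{m}s^{n-m}\beta_n$.

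First I would verify the identity $\sum_{m=0}^{p}\breve\beta_m w^m=\sum_{n=0}^{p}\beta_n z^n$. This follows by repeating the binomial manipulation in \eqref{L2L0}, but now on the \emph{finite} sum: since $z=s+w$, one has $\sum_{n=0}^{p}\beta_n(s+w)^n=\sum_{n=0}^{p}\beta_n\sum_{m=0}^{n}\binom{n}{m}s^{n-m}w^m$, and interchanging the two finite summations collects exactly the coefficient $\breve\beta_m$ in front of $w^m$. Hence the truncated L2L output is \emph{identically equal}, as a function of $\bs r_i$, to the truncated local expansion $\Phi_{LE}^{p}(\bs r_i)$ of \eqref{truncatedLE}; the shift is lossless on polynomials of degree at most $p$.

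Consequently the L2L truncation error is literally the LE truncation error:
\begin{equation*}
\left|\Phi_{2,B_s}^+(\bs r_i)-\sum_{m=0}^{p}\breve\beta_m w^m\right|=\left|\Phi_{2,B_s}^+(\bs r_i)-\sum_{n=0}^{p}\beta_n[(x_c-x_i)+\ri(y_c-y_i)]^n\right|.
\end{equation*}
The right-hand side is bounded in Theorem \ref{theorem3} by $\frac{C\widetilde C_{max}Q_s}{p+1}\tilde q^{p+1}$ with the same geometric ratio $\tilde q=|\bs r_c-\bs r_i|/\min_{\bs r_j^{\rm im}\in B_s}|\bs r_c-\bs r_j^{\rm im}|$. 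Absorbing the bounded factor $\max\{|Z_\varepsilon|^2|\bs r_j^{\rm im}-\bs r_c|^2,1\}$ into the generic constant $C$ while keeping the $1/(1-\tilde q)$ factor explicit yields the claimed bound $\frac{CQ_s}{p+1}\frac{\tilde q^{p+1}}{1-\tilde q}$.

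I expect the only real subtlety, rather than a genuine obstacle, to be making the ``no extra error'' principle precise: one must emphasize that the truncation passing from \eqref{letole1} to \eqref{truncatedL2L} discards exactly the tail $n>p$, which is the same tail already excluded in passing from \eqref{LEformula} to \eqref{truncatedLE}, so that the L2L shift itself contributes nothing new. Once the polynomial identity of the second paragraph is in hand, Theorem \ref{theorem3} closes the argument immediately.
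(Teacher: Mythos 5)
Your proposal is correct and takes essentially the same route as the paper: the paper's proof likewise uses the binomial identity to show that the truncated L2L output collapses to the truncated local expansion $\sum_{n=0}^{p}\beta_n[(x_c-x_i)+\ri(y_c-y_i)]^n$, so the error is exactly the tail $\sum_{n=p+1}^{\infty}\beta_n[(x_c-x_i)+\ri(y_c-y_i)]^n$, which it then bounds via Lemma \ref{theorem1} just as in Theorem \ref{theorem3}. Your only cosmetic difference is invoking Theorem \ref{theorem3} directly rather than re-running its estimate, and your absorption of the factor $\max\{|Z_{\varepsilon}|^2|\bs r_j^{\rm im}-\bs r_c|^2,1\}$ into the generic constant $C$ matches what the paper itself does in its final line.
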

\begin{proof}
	With the help of \eqref{generalintegral}, \eqref{L2L0}-\eqref{letole1} and Lemma \ref{theorem1}, we can get
	\begin{equation*}
		\begin{split}
			&\left|\Phi_{2,B_s}^+(\bs r_i)-\sum\limits_{m=0}^{p}\sum\limits_{n=m}^{p}\frac{n![(x_c-\tilde x_c)+\ri(y_c-\tilde y_c) ]^{n-m}}{(n-m)!m!}\beta_{n}[(\tilde x_c-x_i)+\ri(\tilde y_c-y_i) ]^{m}\right|\\
			=&\left| \sum\limits_{n=p+1}^{\infty}\sum\limits_{m=0}^{n}\frac{n![(x_c-\tilde x_c)+\ri(y_c-\tilde y_c) ]^{n-m}}{(n-m)!m!}[(\tilde x_c-x_i)+\ri(\tilde y_c-y_i) ]^{m}\beta_n\right|\\
			=&\left| \sum\limits_{n=p+1}^{\infty}[(x_c-x_i)+\ri (y_c-y_i)]^n \left(\sum\limits_{\bs r_j^{\rm im}\in B_s}Q_j\ri^{-n} \mathcal I_n(x_c-x_j, y_c-y_j^{\rm im})\right)\right|\\
			\leq&\sum\limits_{n=p+1}^{\infty}\sum\limits_{\bs r_j^{\rm im}\in B_s}|Q_j|\left|\mathcal I_n(x_c-x_j, y_c-y_j^{\rm im})\right| |\bs r_c-\bs r_i|^n\\
			\le&\sum\limits_{n=p+1}^{\infty}\Big(\frac{|\bs r_c-\bs r_i|}{\min\limits_{\bs r_j^{\rm im}\in B_s} |\bs r_c-{\bs r}_j^{\rm im}|}\Big)^n \frac{CQ_s\max\{|Z_{\varepsilon}|^2|\bs r_j^{\rm im}-\bs r_c|^2, 1\}}{n}\\
			\le&\frac{CQ_s}{p+1}\frac{\tilde{q}^{p+1}}{1-\tilde{q}}.
		\end{split}
	\end{equation*}
\end{proof}

The coefficient conversion formula \eqref{metoleimage} from multipole to local (M2L) conversion incurs errors due to truncation requirements. Therefore, we provide below an analysis of the truncation errors in the multipole to local (M2L) conversion formula.
\begin{theorem}\label{theorem6}
	The truncated ME to LE translation \eqref{truncatedM2L} has  error estimate
	\begin{equation}\label{gg1.13}
		\begin{split}
			&\left|\Phi_{2,B_s}^+(\bs r_i)-{\Phi}_{M2L}^{p}(\bs r_i)\right|
			\le\frac{CQ_s}{p+1}\left[C_{max}{q}^{p+1}+\check{C}_{max}\check{q}^{p+1}\right],
		\end{split}
	\end{equation}
	where $q$, $Q_s$ and $C_{max}$ are defined in \eqref{localcharge}, 
	$$\check{C}_{max}=\frac{2\max\{|Z_{\varepsilon}|^2|\bs r'_{c}-\bs r_{c}|^2,1\}|\bs r_c'-\bs r_c|}{2|\bs r_c'-\bs r_c|-\sqrt{2}( d_s+ d_t)},\quad \check{q}=\frac{\sqrt{2} d_t}{2|\bs r_c'-\bs r_c|-\sqrt{2} d_s}<1,$$ 
	and $C$ is a positive constant.
\end{theorem}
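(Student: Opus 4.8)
The plan is to recognize that the truncated M2L translation \eqref{truncatedM2L} is simply the doubly-truncated version of the exact double sum obtained just above \eqref{metoleimage}, namely
\begin{equation*}
\Phi_{2,B_s}^+(\bs r_i)=\sum_{m=0}^{\infty}\sum_{n=0}^{\infty}\frac{\ri^{-m}(n+m)!}{n!m!}\mathcal I_{n+m}(x_c-x_c',y_c-y_c')\,\alpha_n\,w^m,\qquad w:=(x_c-x_i)+\ri(y_c-y_i),
\end{equation*}
where $n$ is the multipole (source) index carried by $\alpha_n$ and $m$ is the local index carried by $w^m$. Since $\Phi_{M2L}^{p}(\bs r_i)$ retains exactly the terms with $0\le n\le p$ and $0\le m\le p$, the error is the sum over the complementary index set, which I would split according to which truncation is responsible:
\begin{equation*}
\Phi_{2,B_s}^+(\bs r_i)-\Phi_{M2L}^{p}(\bs r_i)=\underbrace{\sum_{n=p+1}^{\infty}\sum_{m=0}^{\infty}(\cdots)}_{E_1}+\underbrace{\sum_{n=0}^{p}\sum_{m=p+1}^{\infty}(\cdots)}_{E_2}.
\end{equation*}
The interchange of summation order needed to rearrange the terms is justified by absolute convergence, which holds under the separation hypothesis $|\bs r_c-\bs r_c'|>\tfrac{\sqrt2}{2}(d_s+d_t)$.

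For $E_1$ (the multipole tail) I would perform the inner sum over the local index $m$ first: by the same identity used to derive \eqref{metoleimage}, this collapses each $n$-term back to $\alpha_n\mathcal I_n(x_i-x_c',y_i-y_c')$, so that $E_1=\sum_{n=p+1}^{\infty}\alpha_n\mathcal I_n(x_i-x_c',y_i-y_c')$ is precisely the multipole truncation error already bounded in Theorem \ref{theorem2}. This produces the first term $C_{max}q^{p+1}$ in \eqref{gg1.13} with no further work. The genuine content of the estimate therefore lies in $E_2$, the local-expansion tail arising from the retained multipole coefficients.

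For $E_2$ I would insert three elementary bounds: $|\alpha_n|\le Q_s(\tfrac{\sqrt2}{2}d_s)^n$ from \eqref{MEcoefficients} (the image charges lie in $B_s$), $|w|=|\bs r_c-\bs r_i|\le\tfrac{\sqrt2}{2}d_t$, and the decisive estimate $|\mathcal I_{n+m}(x_c-x_c',y_c-y_c')|\le C\max\{|Z_\varepsilon|^2R^2,1\}/\big((n+m)R^{n+m}\big)$ from Lemma \ref{theorem1}, writing $R=|\bs r_c-\bs r_c'|$, $a=\tfrac{\sqrt2}{2}d_s$, $b=\tfrac{\sqrt2}{2}d_t$. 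Using $\tfrac{1}{n+m}\le\tfrac{1}{p+1}$ on the range $m\ge p+1$ reduces $E_2$ to $\tfrac{1}{p+1}\sum_{n=0}^{p}\sum_{m=p+1}^{\infty}\binom{n+m}{n}(a/R)^n(b/R)^m$ times the constant $CQ_s\max\{|Z_\varepsilon|^2R^2,1\}$. I would then enlarge the $n$-sum to infinity and apply the negative-binomial generating function $\sum_{n=0}^{\infty}\binom{n+m}{n}x^n=(1-x)^{-(m+1)}$ with $x=a/R<1$, leaving a single geometric series $\tfrac{R}{R-a}\sum_{m=p+1}^{\infty}\big(b/(R-a)\big)^m$. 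Summing it with ratio $\check q=b/(R-a)<1$ yields $\tfrac{R}{R-a-b}\check q^{p+1}$, and hence the second term $\check C_{max}\check q^{p+1}$ once the factor $\max\{|Z_\varepsilon|^2R^2,1\}$ is folded into $\check C_{max}=\max\{|Z_\varepsilon|^2R^2,1\}R/(R-a-b)$, matching the definitions in the statement.

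The main obstacle I anticipate is the clean decoupling of the two simultaneous truncations in $E_2$: one must see that the multipole tail reassembles into the pure ME error (handled by Theorem \ref{theorem2}), while the local tail is summable in closed form through the negative-binomial identity followed by a geometric series. A secondary technical point is that Lemma \ref{theorem1} requires $n+m\ge e|Z_\varepsilon|R$; since every term of $E_2$ has $n+m\ge p+1$, this holds automatically once $p+1\ge e|Z_\varepsilon|R$, and the finitely many small-$p$ exceptions can be absorbed into the generic constant $C$.
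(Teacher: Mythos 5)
Your proposal is correct and takes essentially the same approach as the paper: your partition of the double sum into $E_1$ and $E_2$ is exactly the paper's triangle-inequality decomposition through the intermediate truncated ME, since $E_1=\Phi_{2,B_s}^+(\bs r_i)-\Phi_{ME}^{p}(\bs r_i)$ (bounded by Theorem \ref{theorem2}, giving the $C_{max}q^{p+1}$ term) and $E_2=\Phi_{ME}^{p}(\bs r_i)-\Phi_{M2L}^{p}(\bs r_i)$. Your treatment of $E_2$ --- the bounds $|\alpha_n|\le Q_s(\tfrac{\sqrt2}{2}d_s)^n$ and $|w|\le\tfrac{\sqrt2}{2}d_t$, Lemma \ref{theorem1}, the inequality $\tfrac{1}{n+m}\le\tfrac{1}{p+1}$, enlarging one sum to infinity via the negative-binomial identity, and the closing geometric series --- reproduces the paper's computation step for step, arriving at the same $\check{C}_{max}$ and $\check{q}$.
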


\begin{proof}
	By the definition \eqref{truncatedME}, \eqref{truncatedM2L} and \eqref{truncatedM2Lcoef}, we have
	\begin{equation*}
		\begin{split}
			&{\Phi}_{ME}^{p}(\bs r_i)-{\Phi}_{M2L}^{p}(\bs r_i)\\
			=&\sum\limits_{m=0}^{p}\alpha_{m}\mathcal I_m(x_i-x_c', y_i-y_c')-\sum\limits_{n=0}^{p}\sum\limits_{m=0}^{p}\frac{\ri^{-n}(n+m)!\mathcal I_{n+m}(x_c- x'_c,y_c-y'_c)\alpha_{m}}{ n!m![(x_c-x_i)+\ri (y_c-y_i)]^{-n}}.
		\end{split}    
	\end{equation*}
	The derivation of the M2L translation shows that 
	\begin{equation*}
		\mathcal I_m(x_i-x_c', y_i-y_c')=\sum\limits_{n=0}^{\infty}\frac{\ri^{-n}(n+m)!\mathcal I_{n+m}(x_c- x'_c,y_c-y'_c)}{ n!m![(x_c-x_i)+\ri (y_c-y_i)]^{-n}}.
	\end{equation*}
	Substituting this expansion back into the last equation and then taking absolute value on both sides, we obtain
	\begin{equation*}
		\begin{split}
			|{\Phi}_{ME}^{p}(\bs r_i)-{\Phi}_{M2L}^{p}(\bs r_i)|=\left|\sum\limits_{n=p+1}^{\infty}\sum\limits_{m=0}^{p}\frac{\ri^{-n}(n+m)!\mathcal I_{n+m}(x_c- x'_c,y_c-y'_c)\alpha_{m}}{ n!m![(x_c-x_i)+\ri (y_c-y_i)]^{-n}}
			\right|.
		\end{split}
	\end{equation*}
	Applying Lemma \ref{theorem1} and the estimate 
	$$|\alpha_m|\leq Q_s\Big(\frac{\sqrt{2}}{2}d_s\Big)^m,\quad |[(x_c-x_i)+\ri (y_c-y_i)]^{n}|\leq \Big(\frac{\sqrt{2}}{2}d_t\Big)^n$$
	gives estimate
	\begin{equation*}
		\begin{split}
			|{\Phi}_{ME}^{p}&(\bs r_i)-{\Phi}_{M2L}^{p}(\bs r_i)| \leq \sum\limits_{n=p+1}^{\infty}\sum\limits_{m=0}^{p} \frac{(n+m)!}{n!m!}\frac{CQ_s\max\{|Z_{\varepsilon}|^2|\bs r'_{c}-\bs r_{c}|^2,1\}\hat d_s^m\hat d_t^n}{(n+m)|\bs r_c'-\bs r_c|^{n+m}}\\
			\leq&\sum\limits_{n=p+1}^{\infty}\frac{CQ_s\max\{|Z_{\varepsilon}|^2|\bs r'_{c}-\bs r_{c}|^2,1\}}{n}\Big(\frac{\hat d_t}{|\bs r_c'-\bs r_c|}\Big)^n\sum\limits_{m=0}^{\infty} \frac{(n+m)!}{n!m!}\Big(\frac{\hat d_s}{|\bs r_c'-\bs r_c|}\Big)^m\\
			\leq&\frac{CQ_s\max\{|Z_{\varepsilon}|^2|\bs r'_{c}-\bs r_{c}|^2,1\}}{p+1}\sum\limits_{n=p+1}^{\infty}\Big(\frac{\hat d_t}{|\bs r_c'-\bs r_c|}\Big)^n\Big(\frac{|\bs r_c'-\bs r_c|}{|\bs r_c'-\bs r_c|-\hat d_s}\Big)^{n+1}\\
			=&\frac{CQ_s\max\{|Z_{\varepsilon}|^2|\bs r'_{c}-\bs r_{c}|^2,1\}}{p+1}\frac{|\bs r_c'-\bs r_c|}{|\bs r_c'-\bs r_c|-\hat d_s-\hat d_t}\Big(\frac{\hat d_t}{|\bs r_c'-\bs r_c|-\hat d_s}\Big)^{p+1},
		\end{split}
	\end{equation*}
	where $\hat d_s=\frac{\sqrt{2}}{2}d_s, \hat d_t=\frac{\sqrt{2}}{2}d_t$. Now, the triangular inequality
	\begin{equation*}
		\begin{split}
			|\Phi_{2,B_s}^+(\bs r_i)-{\Phi}_{M2L}^{p}(\bs r_i)|\leq|\Phi_{2,B_s}^+(\bs r_i)-{\Phi}_{ME}^{p}(\bs r_i)|+|{\Phi}_{ME}^{p}(\bs r_i)-{\Phi}_{M2L}^{p}(\bs r_i)|
		\end{split}
	\end{equation*}
	together with Theorem \ref{theorem2} implies the conclusion.
\end{proof}

In the FMM, the longest approximation chain is: first calculate truncated ME $\Phi_{ME}^p(\bs r_i)$ and then do a ME to ME shifting to obtain a new ME $\widetilde{\Phi}_{ME}^p(\bs r_i)$ w.r.t. to the center of the parent box. After that a M2L translation is taken to translate $\widetilde{\Phi}_{ME}^p(\bs r_i)$ to a LE ${\Phi}_{M2L}^p(\bs r_i)$ and finally do a LE shifting to obtain a new LE $\widetilde{\Phi}_{LE}^p(\bs r_i)$ w.r.t. the center of a child box. Apparently, with the error estimates presented in Theorem \ref{theorem2} to Theorem \ref{theorem6}, an overall exponential convergence w.r.t truncation number $p$ for the FMM can be obtained by simply using the triangular inequality.

\section{Numerical examples}\label{section5}
In this section, some numerical examples are given to validate the efficacy and precision of the proposed fast multipole method. 

{\bf Example 1:} This example is to verify the exponential convergence we have proved for the ME, LE expansions and their shifting (M2M, L2L) and translation (M2L) operators. Given a target-source pair located at the points $\bs r=(0.625, 1.25) $,  $\bs r'=(0, 0.375)$. Consider the far-field approximation of $ \Phi_2^+(\bs{r})$ and $ \Phi_2^-(\bs{r})$ with respect to target centers 
$$\bs r_c=(0.65625, 1.09375),\quad \tilde{\bs r}_c=(0.8125, 0.9375).$$
and source centers 
$$ \bs r_c'=(0.03125, -0.46875),\quad \tilde{\bs r}_c'=(0.1875, -0.3125).$$
Here, the image point is given by $\bs r'_{\rm im}=(0, -0.375) $. A diagram for the locations of the source, target and centers is presented in Fig. \ref{Hezi}. 

We set a charge $Q=1$ at the source point $\bm r'$. The absolute error and the theoretical estimate of the multipole expansion (ME) for $\Phi_2^+(\bs{r})$ are compared in Fig. \ref{f4.2} (a). Analogously, the error and theoretical results for the local expansion (LE) formula to calculate $\Phi_2^-(\bs{r})$ are compared in Fig. \ref{f4.2} (b). The results show that both ME and LE have exponential convergence with respect to the truncation number $p$ and our theoretical analysis gives a sharp error estimate for the approximation.  We also compare the convergence of the whole approximation used in the FMM with our theoretical results in Fig. \ref{f4.2} (c), (d).  Clearly, the proposed FMM has exponential convergence with respect to truncation number $p$ and the numerical results are consistent with the error analysis provided in  Section \ref{section4}. 

\begin{figure}[h!]
	\centering
	\includegraphics[width=0.43\textwidth]{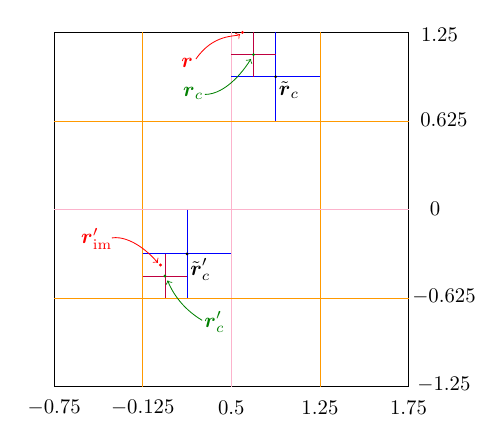}\quad
	\includegraphics[width=0.43\textwidth]{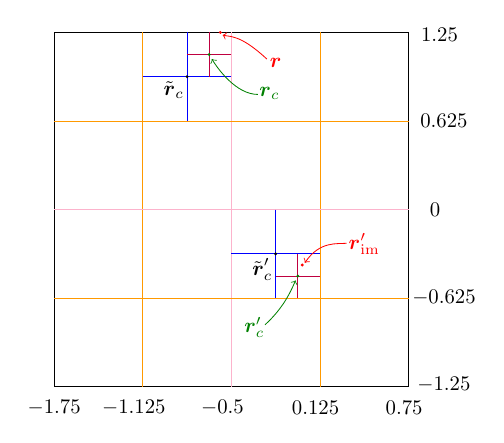}
	\caption{Diagrams of the imag points for $\Phi_2^+(\boldsymbol{r})$ (left) and $\Phi^-_2(\boldsymbol{r})$ (right).}
	\label{Hezi}
\end{figure} 

		
		

\begin{figure}[!ht]
	\centering
	\subfigure[ME for $\Phi_2^+(\bs{r})$]{
		\includegraphics[width=0.45\textwidth]{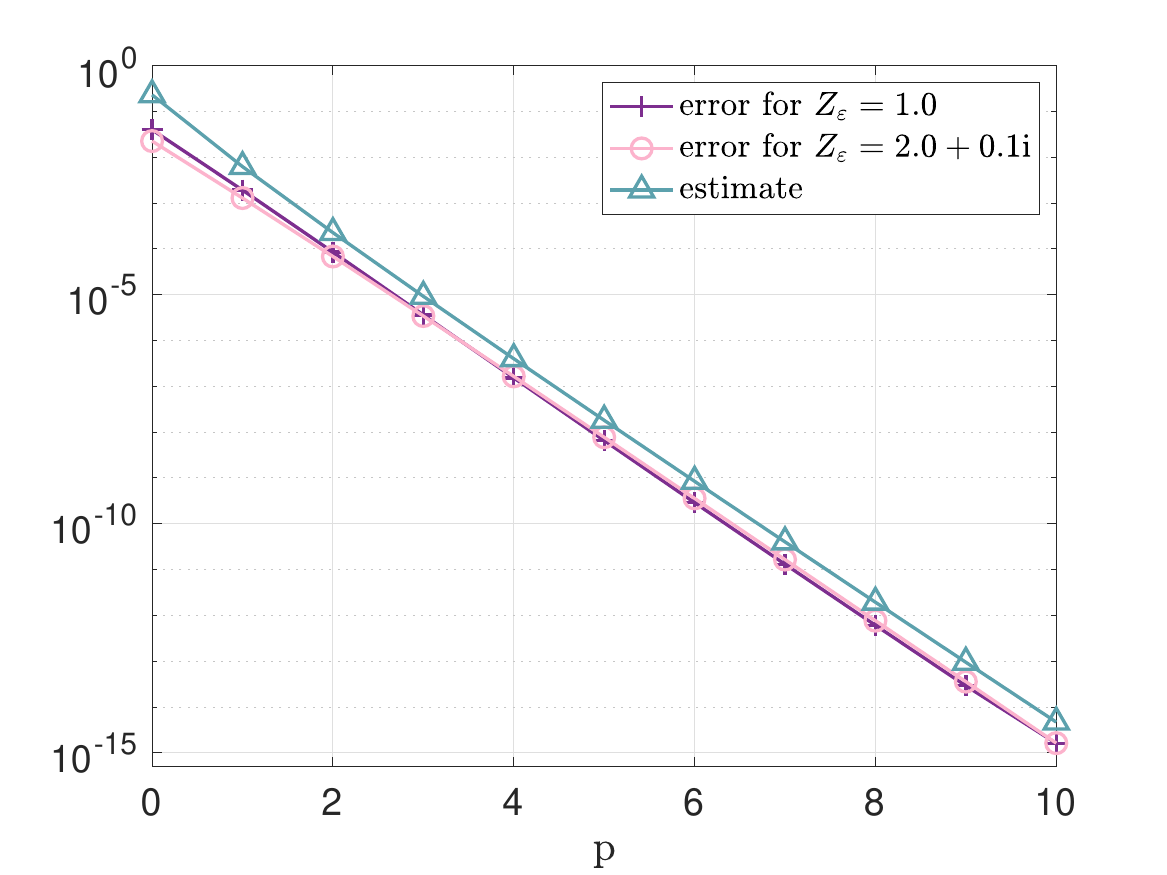}}\quad
	\subfigure[ LE for $\Phi_2^-(\bs{r})$]{
		\includegraphics[width=0.45\textwidth]{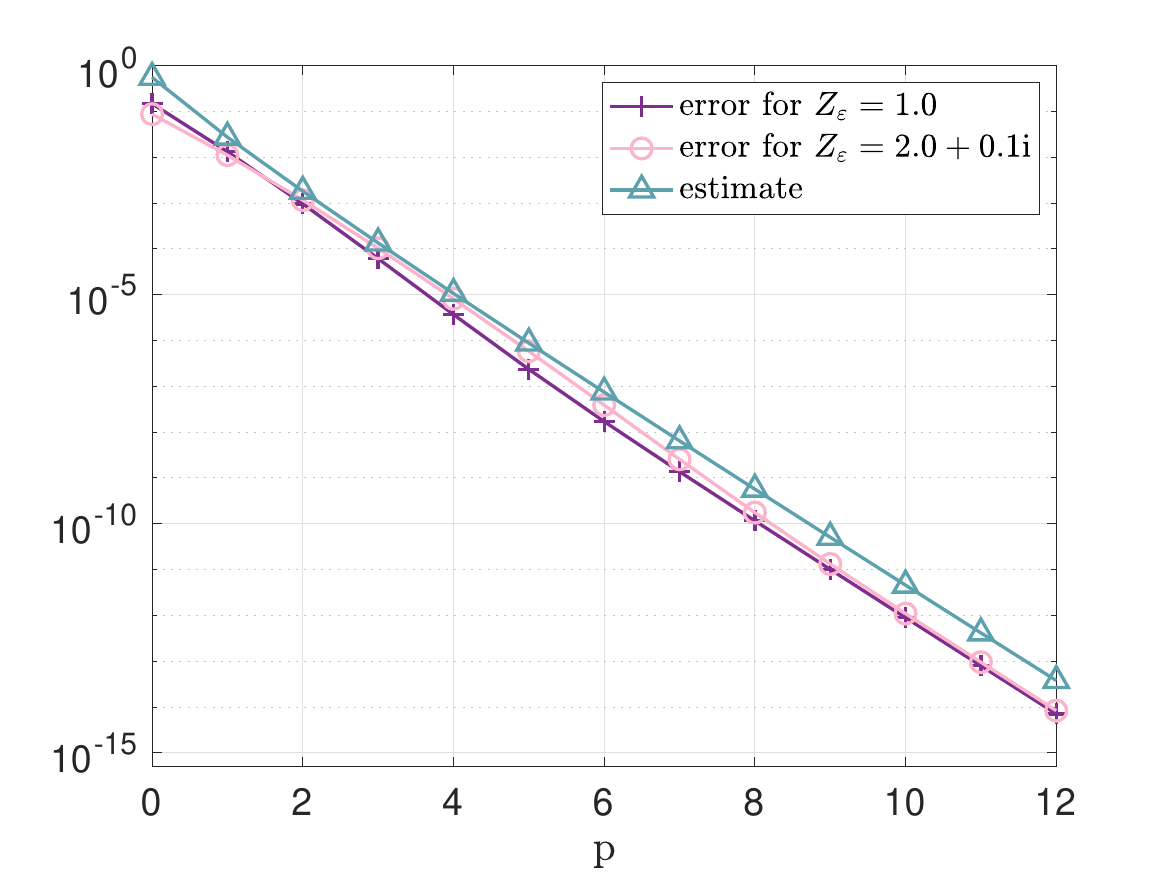}}
	\subfigure[M2M$\rightarrow$M2L$\rightarrow$L2L for $\Phi_2^+(\bs{r})$]{
		\includegraphics[width=0.45\textwidth]{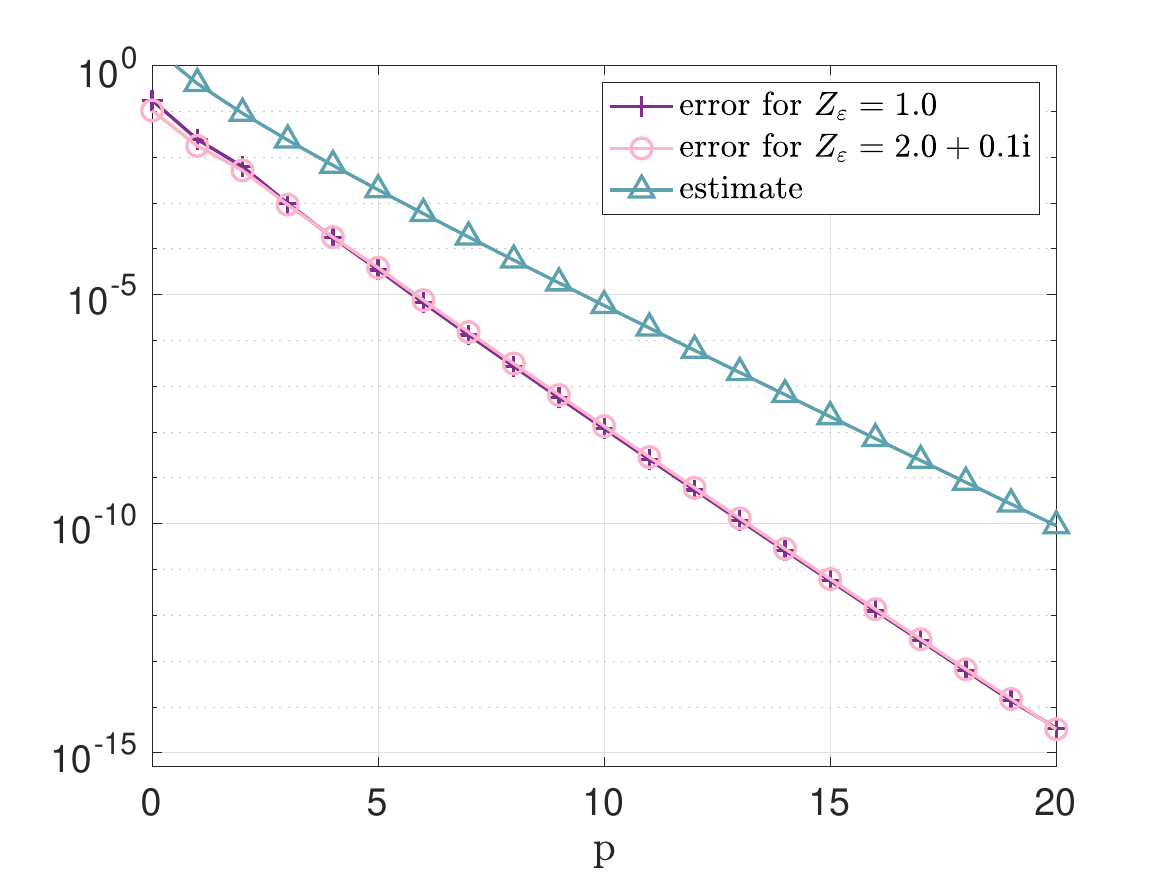}}\quad
	\subfigure[M2M$\rightarrow$M2L$\rightarrow$L2L for $\Phi_2^-(\bs{r})$]{
		\includegraphics[width=0.45\textwidth]{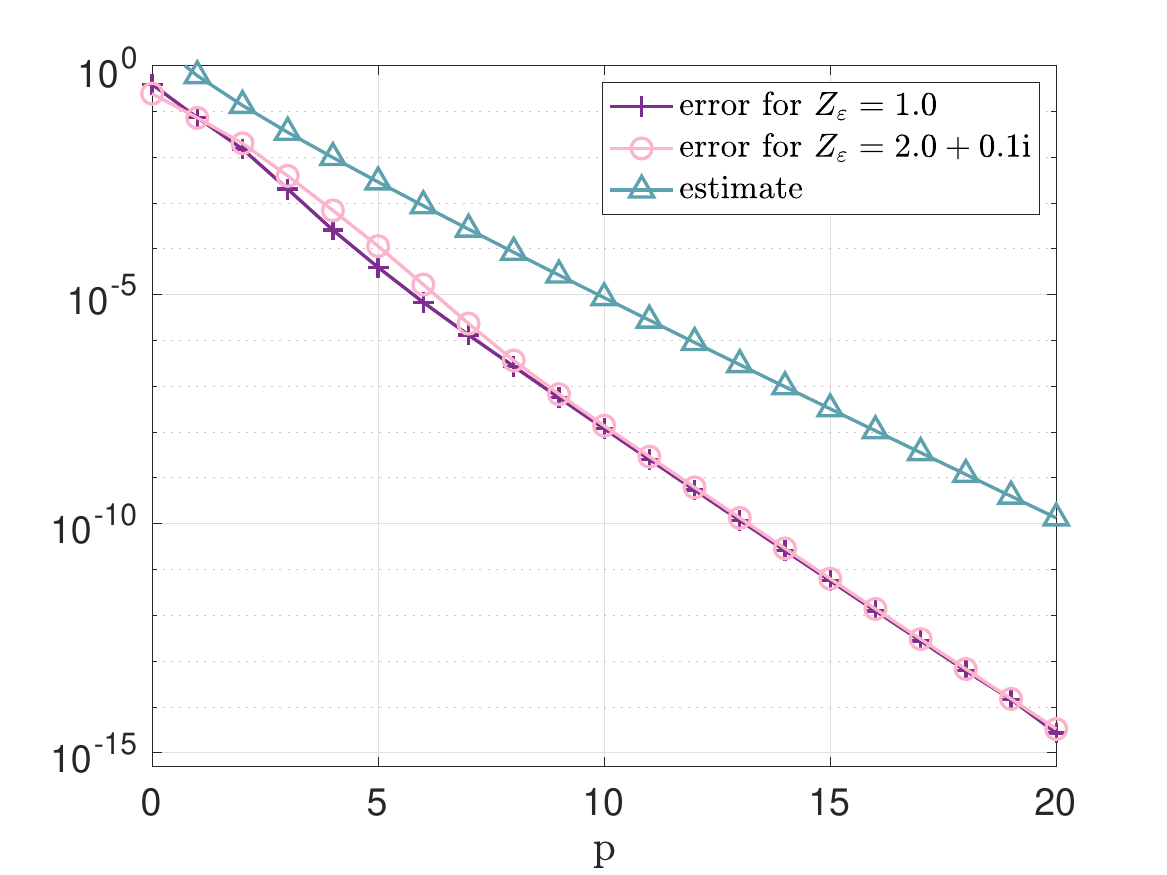}}
	\caption{Exponential convergence of the expansions and shifting and translation operators.}\label{f4.2}
\end{figure}


\begin{figure}[!ht]
	\centering
	\subfigure[Charged circles]{\includegraphics[width=0.4\textwidth]{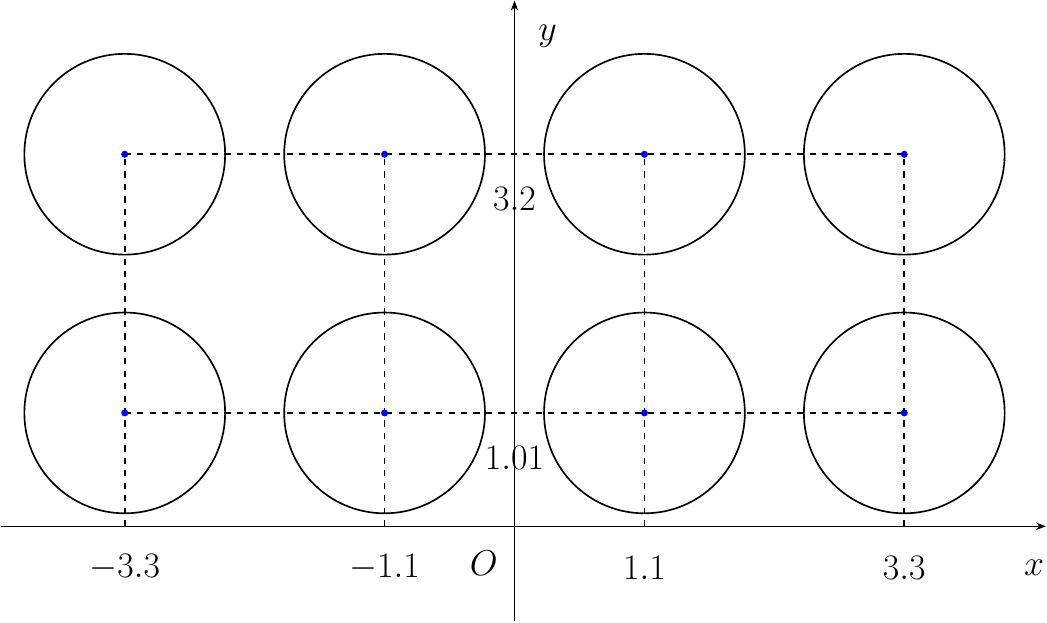}}\qquad\quad\quad
	\subfigure[$\Phi^{free}$]{\includegraphics[width=0.48\textwidth]{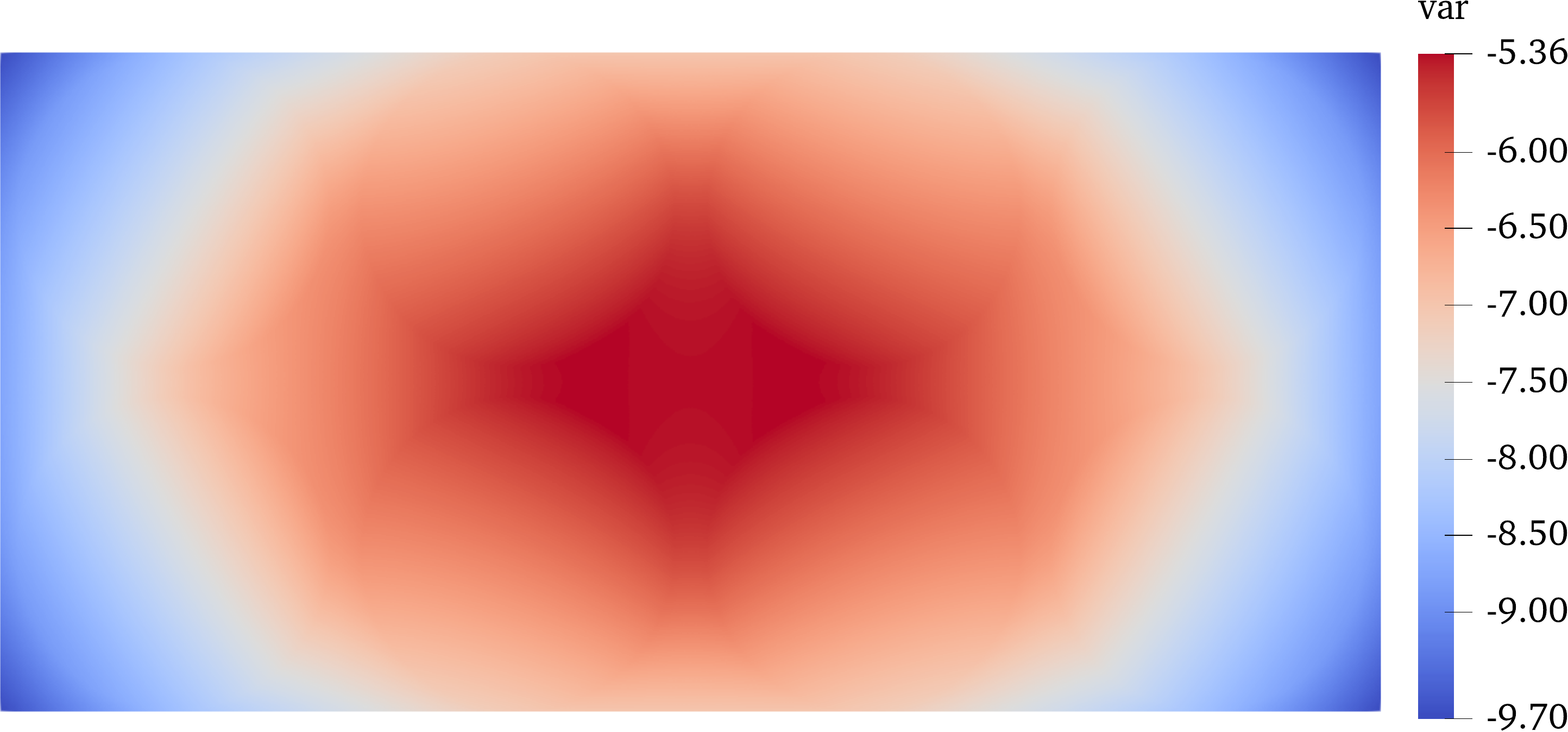}}\\
	\subfigure[Real part of $\Phi^{free}+\Phi_1+\Phi_2$]{
		\includegraphics[width=0.48\textwidth]{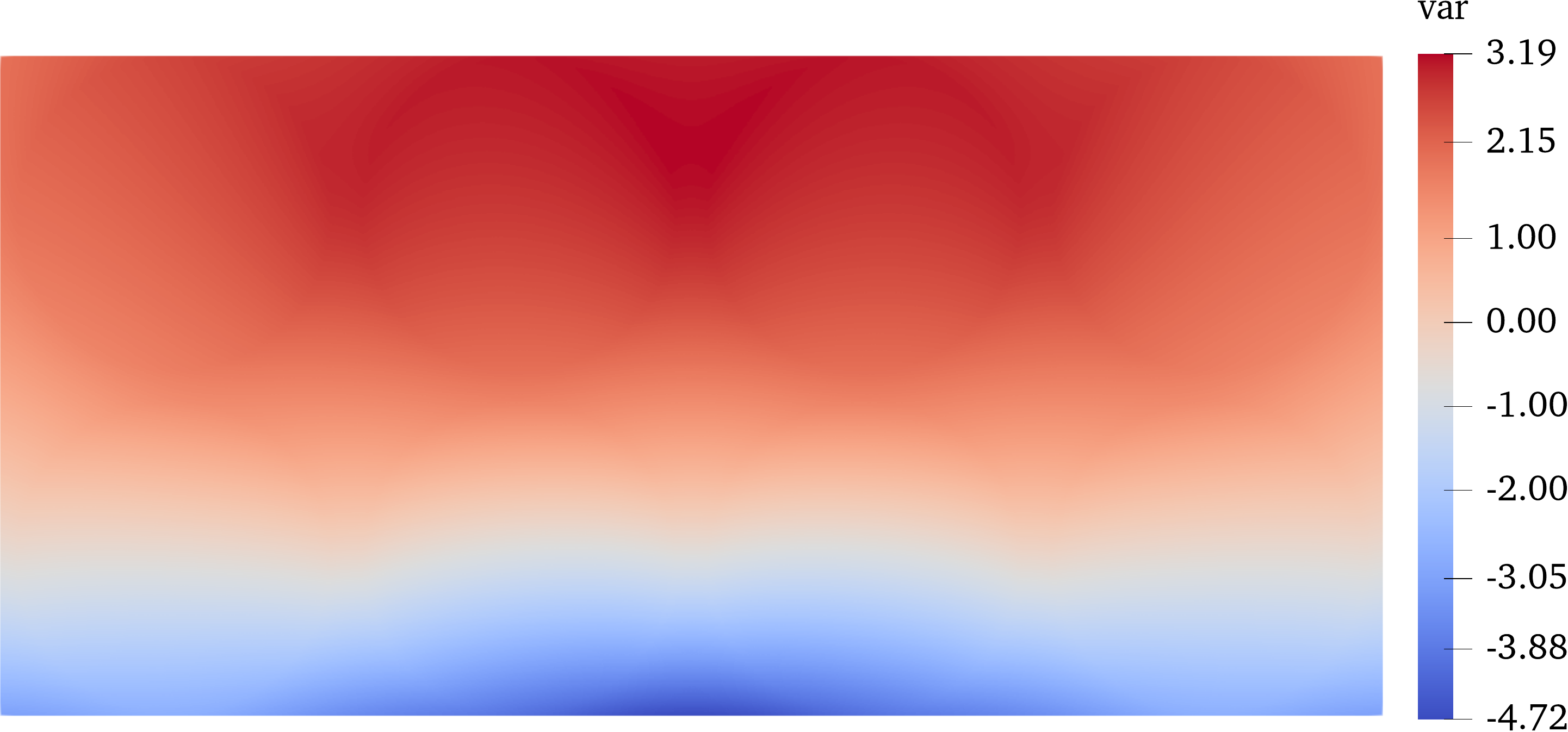}}
	\subfigure[Imaginary part of $\Phi^{free}+\Phi_1+\Phi_2$]{
		\includegraphics[width=0.48\textwidth]{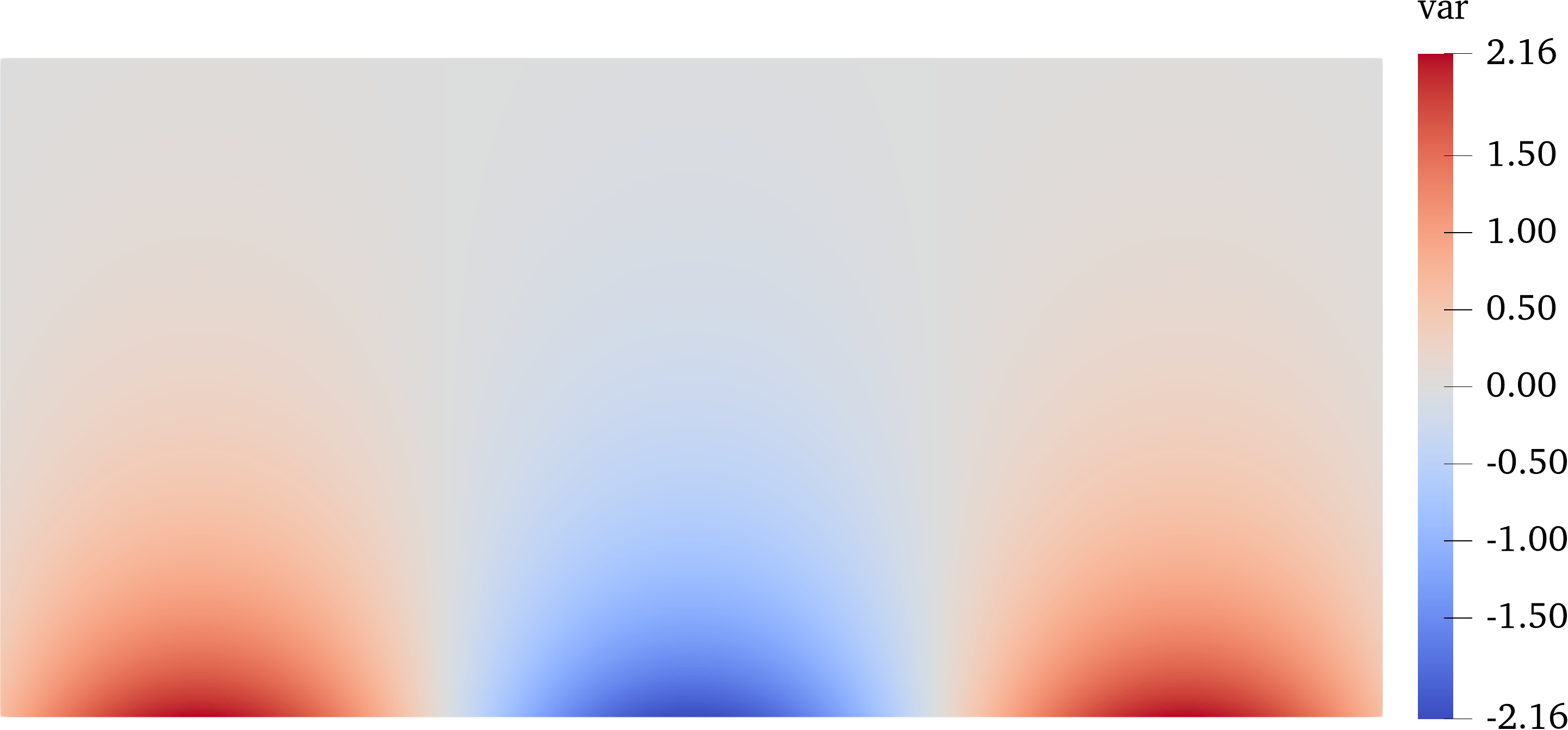}}
	\caption{A comparison between $\Phi^{free}$ and $\Phi^{free}+\Phi_1+\Phi_2$.
	}\label{val}
\end{figure}



\begin{figure}[!ht]
	\centering
	\subfigure[Error vs p with $N^{\rm field}+8N^{\rm source}=4000$.]{
		\includegraphics[width=0.48\textwidth]{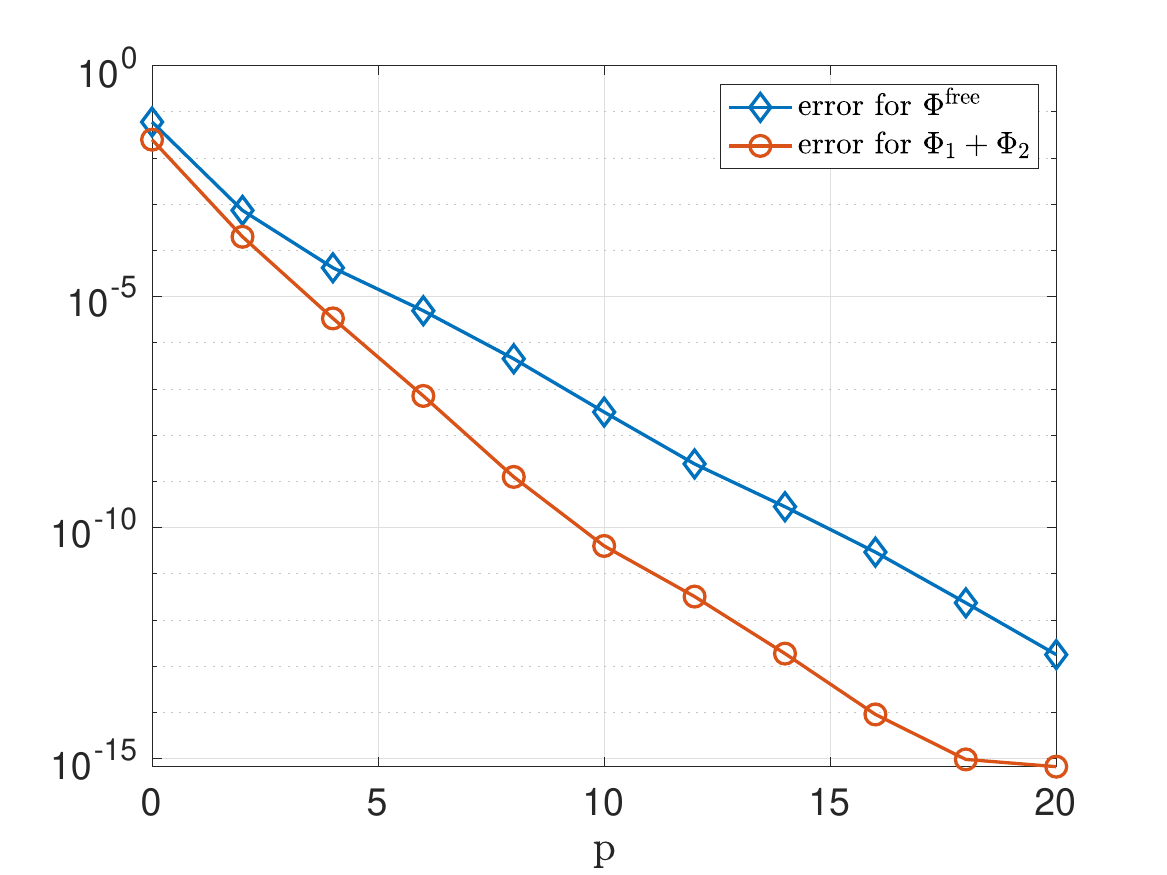}}
	\subfigure[CPU time vs $N^{\rm field}+8N^{\rm source}$ with $p=10$.]{
		\includegraphics[width=0.48\textwidth]{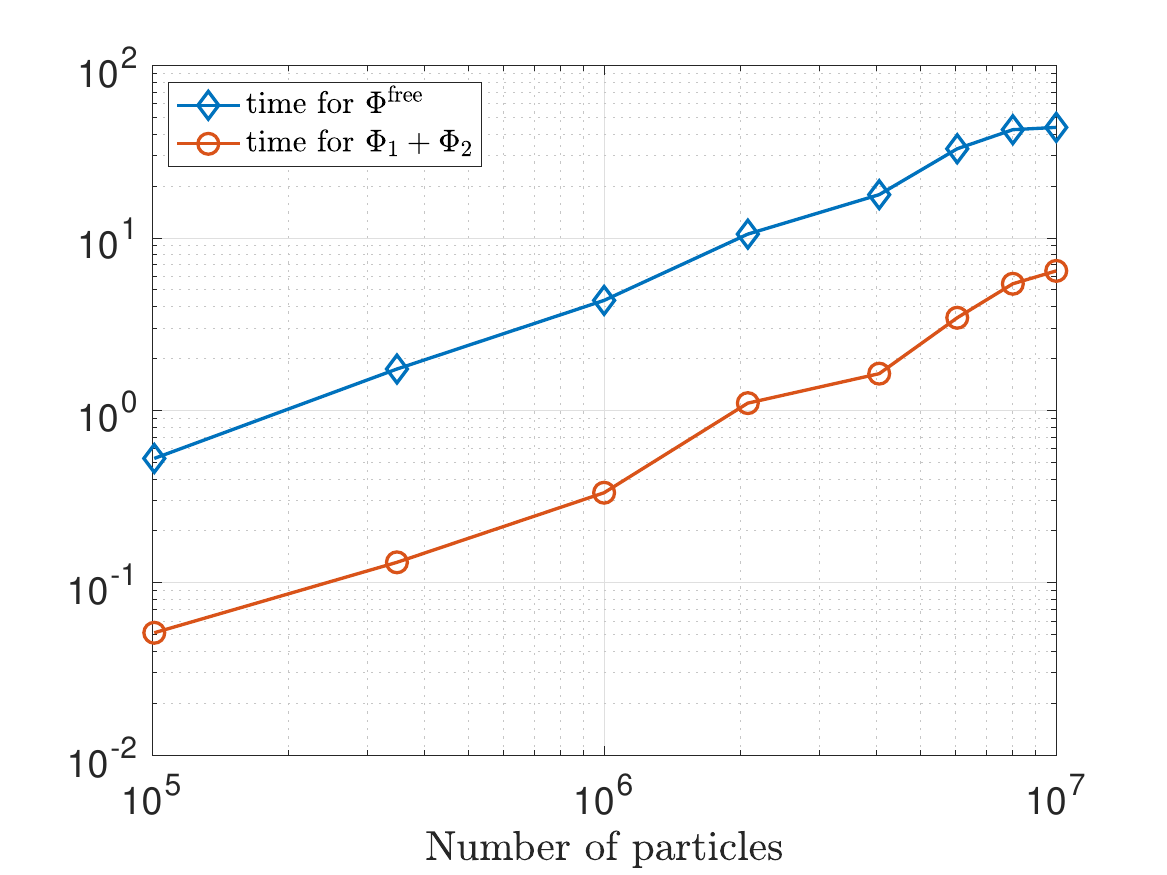}}
	\caption{Performance of the FMM.}\label{time}
\end{figure}

{\bf Example 2:} This example is used to test the efficiency and accuracy of the proposed FMM for the multiple charge interaction problem. Consider $8$ circles of radius $1$ centered on $(-3.3+2.2n, 1.01)$, $(-3.3+2.2n, 3.2)$, $n=0, 1, 2, 3$. Uniform line charges with density $\rho=0.0001$ are assumed on the circles. The lossless boundary condition with $Z_{\varepsilon}=1.0$ is imposed on $y=0$. In this example, there are charges close to the boundary $y=0$ (with distance equal to $0.01$), see Fig. \ref{val} (a) for a sketch of the configuration. Subsequently, we employ the proposed fast multipole method to compute the potential in the domain $\Omega=[-4.4, 4.4]\times[0, 4.2]$. 
Uniform Cartesian meshes are used to discretize $\Omega$ with mesh points denoted by $\{\bs r_j=(x_j, y_j)\}_{j=1}^{N_{\rm field}}$ where $N_{\rm field}$ is the number of field points. 
The circles are also discretized using uniform meshes where $\{\tilde{\bs r}_{ij}\}_{j=1}^{N_{\rm source}}$, $i=1, 2, \cdots, 8$ is the middle points of the $j$-th segment on the $i$-th circle. The free space and reaction components of the potential are then approximated by
\begin{equation*}
	\begin{split}
		&\Phi^{\rm free}(\bs r_k)=-\frac{1}{2\pi}\sum\limits_{i=1}^8\sum\limits_{j=1}^{N_{\rm source}}\rho_{ij}\ln(|\bs r_k-\tilde{\bs r}_{ij}|),\\
		&\Phi_1(\bs r_k)=\frac{1}{2\pi}\sum\limits_{i=1}^8\sum\limits_{j=1}^{N_{\rm source}}\rho_{ij}\ln(|\bs r_k-\tilde{\bs r}_{ij}^{\rm im}|),\quad \Phi_2(\bs r_k)=\sum\limits_{i=1}^8\sum\limits_{j=1}^{N_{\rm source}}\rho_{ij}G_{Z_{\varepsilon}}(\bs r_k,\tilde{\bs r}_{ij}),
	\end{split}
\end{equation*}
where $\rho_{ij}=\frac{2\pi}{N_{\rm source}}$ is the charge inside each segment. The potentials with or without the presence of an impedance boundary at $y=0$ are compared in Fig. \ref{val} and the accuracy versus the truncation number $p$ and the CPU time versus the total number of particles are plotted in Fig. \ref{time}. We can clearly see that the proposed FMM has exponential convergence and $O(N)$ complexity as the classic FMM. Moreover, the computation times for the reaction components are much shorter than that for the free space components. 

\section{Conclusion}\label{section6}
In this paper, we propose a fast multipole method for the two-dimensional Laplace equation in the half-plane with a Robin boundary condition. The algorithm is implemented by incorporating a novel far-field approximation theory for the Green's function of the half-plane problem into the framework of the classic FMM. Exponential convergence of the far-field approximation theory and a thorough error estimate of the FMM are proved. Both theoretical analysis and numerical results show that the proposed FMM for half-space problems can achieve performance similar to that of the classic FMM for free-space problems.

In future work, we will consider the simulation of water waves using our FMM together with boundary element techniques.  Moreover, we will develop the fast multipole method for the three-dimensional Laplace equation in half space with Robin boundary condition which has important applications in medical sciences and engineering.


\section*{Appendix A. Proof of Theorem 2.1}
\begin{proof}
	We shall only give a proof for the first expansion as the other one can be proved similarly. 
	Applying the variable substitution $t=(y-\ri x)\lambda$, we have
	\begin{equation}\label{integralalongS}
		\mathcal I_0(x-x', y-y')
		=\frac{1}{2\pi}\int_{S}\frac{e^{-t}e^{{(y'-\ri x')t}/{(y-\ri x)}}}{t-Z_{\varepsilon}(y-\ri x)}dt,
	\end{equation}
	where $S$ is the contour defined as $S=\{t=(y-\ri x)\lambda: \lambda\in [0,+\infty)\}$, see Fig. \ref{f1.4}. We shall change the contour from $S$ back to the real line in the $t$-plane using Cauchy theorem. As the integrand has a simple pole $P=Z_{\varepsilon}(y-\ri x)$ in the $t$-plane, the contour deformation depends on the position of $P$ and could produce some extra term w.r.t the pole.
	\begin{figure}[!ht]
		\centering
		\subfigure[$x<0$]{
			\includegraphics[width=0.40\textwidth]{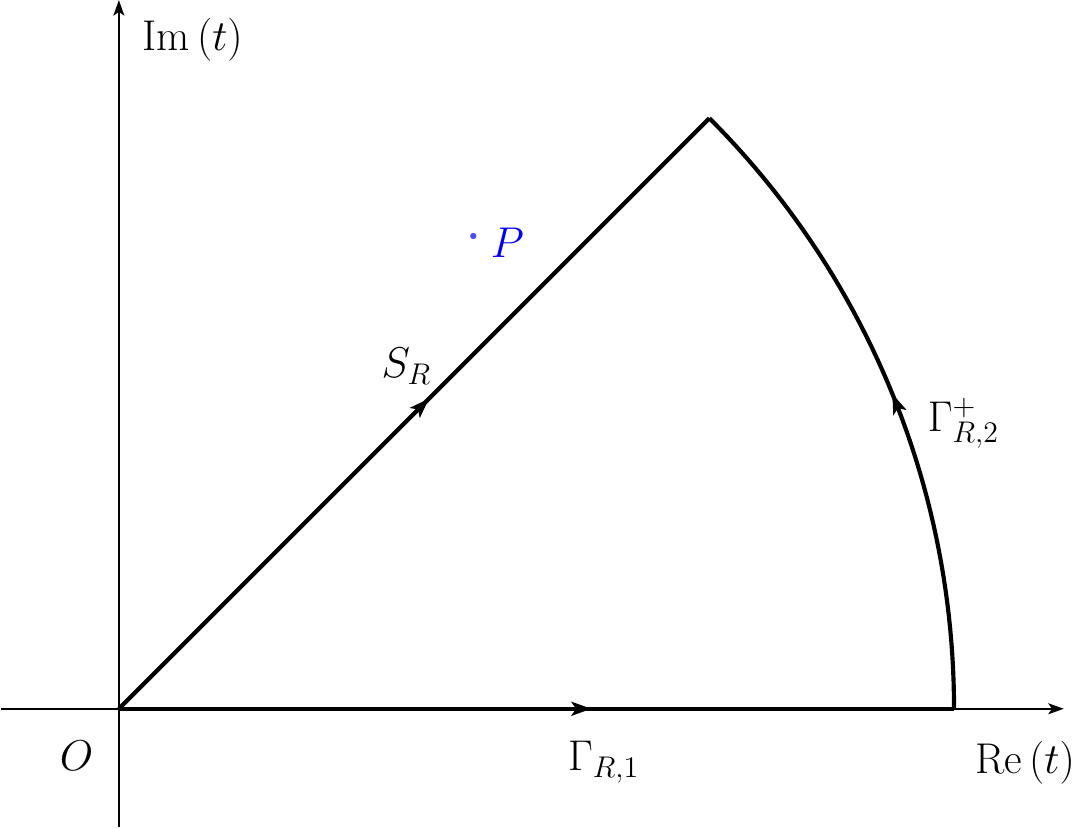}}\quad
		\subfigure[$x>0$, $\mathfrak{Im}(P)>0$]{
			\includegraphics[width=0.40\textwidth]{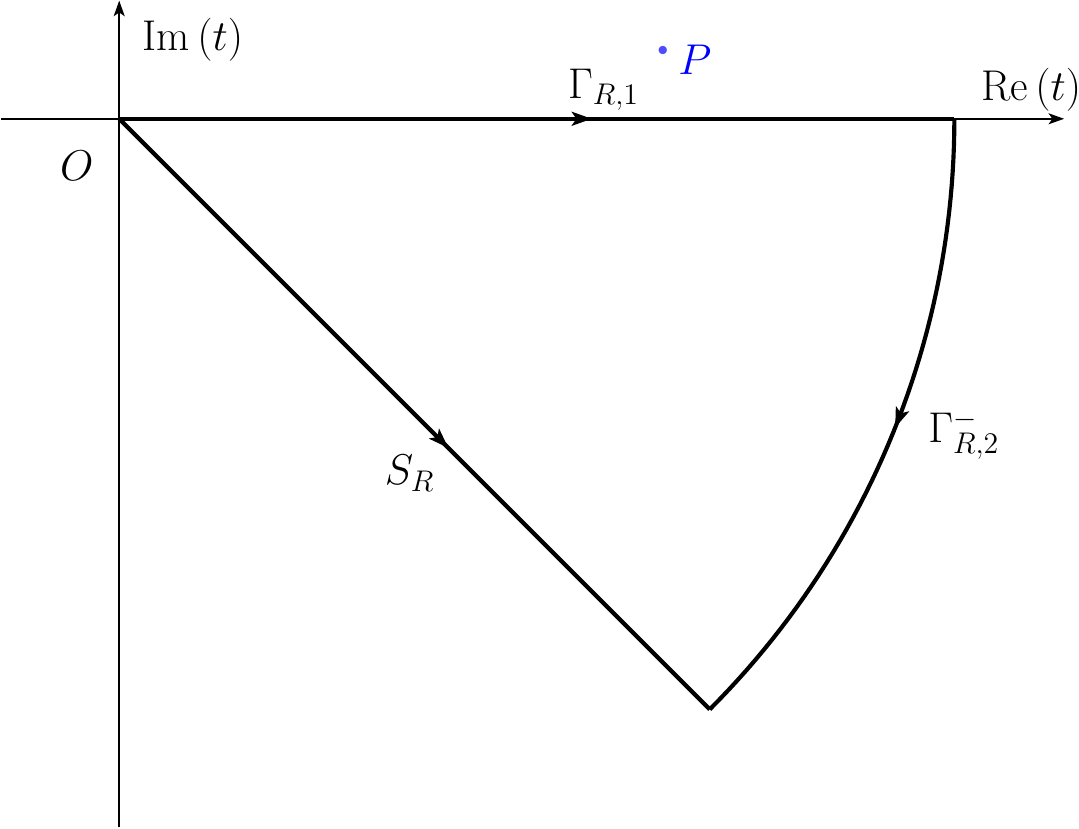}}
		\subfigure[$x>0$, $\mathfrak{Im}(P)=0$]{
			\includegraphics[width=0.40\textwidth]{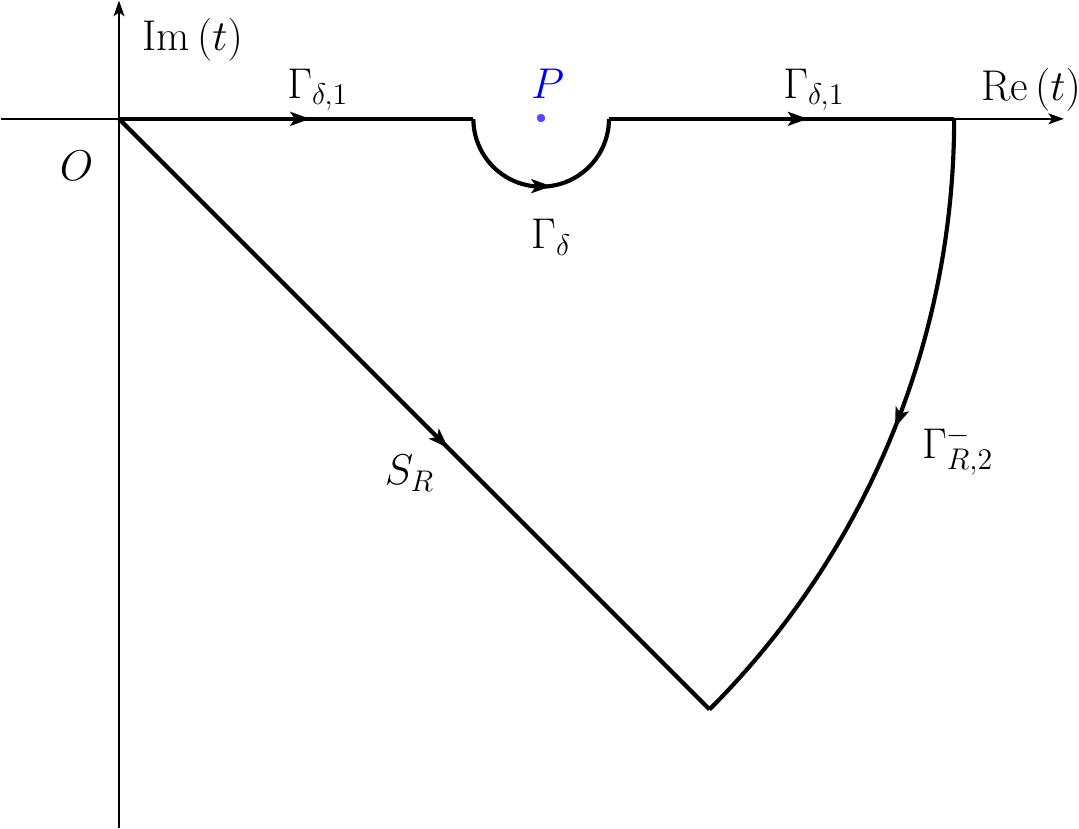}}\quad
		\subfigure[$x>0$, $\mathfrak{Im}(P)<0$]{
			\includegraphics[width=0.40\textwidth]{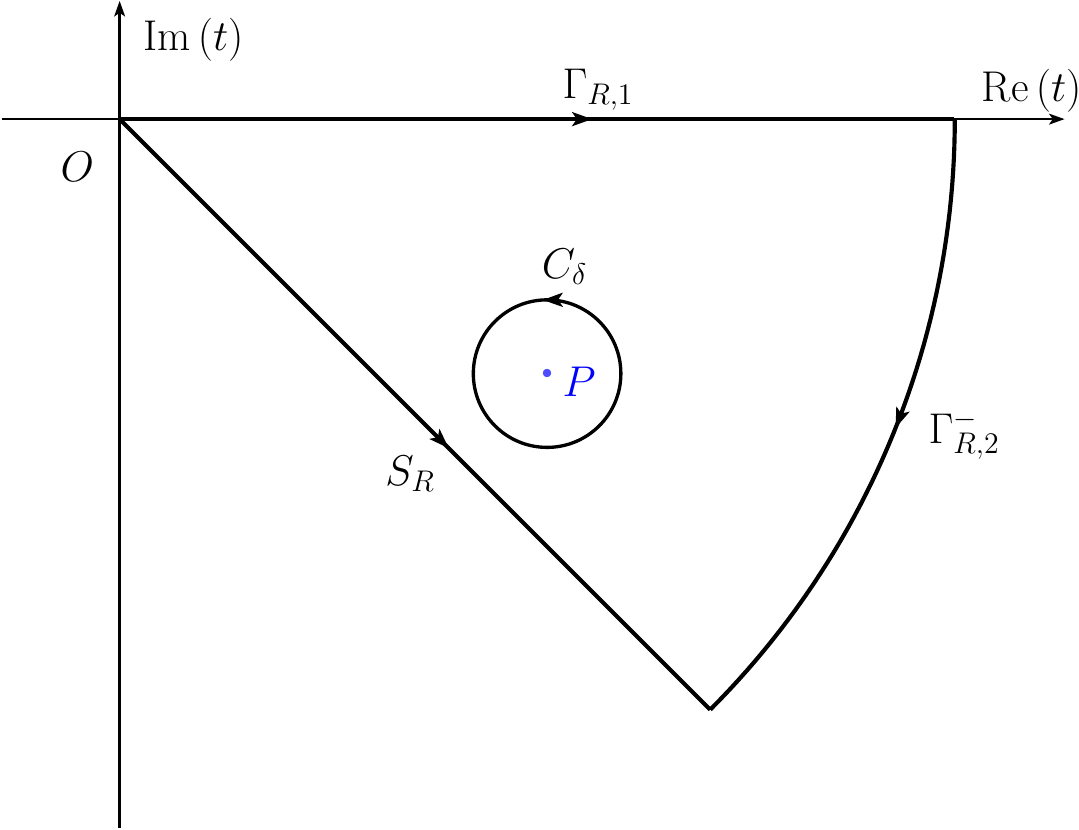}}
		\caption{Sketch of the contour change in four different cases.}\label{f1.4}
	\end{figure}
	
	Let us first consider the case $x= 0$. Integral formulation \eqref{integralalongS} can be simplified as
	\begin{equation}\label{integralrealline}
		\begin{split}
			\mathcal I_0(-x', y-y')
			=\frac{1}{2\pi}\int_{0}^{\infty}\frac{e^{-y\lambda}e^{(y'-\ri x')\lambda}}{\lambda-Z_{\varepsilon}}d\lambda=\int_{0}^{\infty}\sum\limits_{n=0}^{\infty}\frac{(y'-\ri x')^n}{2\pi y^nn!}\frac{e^{-\lambda}\lambda^n}{\lambda-yZ_{\varepsilon}}d\lambda.
		\end{split}
	\end{equation}
	Further, the assumption $|\bs r|>|\bs r'|$ gives
	\begin{equation*}
		\begin{split}
			\sum\limits_{n=0}^{\infty}\int_{0}^{\infty}\Big|\frac{(y'-\ri x')^n}{y^nn!}\frac{e^{-\lambda}\lambda^n}{\lambda-yZ_{\varepsilon}}\Big|d\lambda\leq\frac{1}{\varepsilon y}\sum\limits_{n=0}^{\infty}\Big|\frac{(y'-\ri x')^n}{y^n}\Big|=\frac{1}{\varepsilon y}\sum\limits_{n=0}^{\infty}\Big(\frac{|\bs r'|}{|\bs r|}\Big)^n<\infty.
		\end{split}
	\end{equation*}
	Then, the Fubini's theorem shows that the improper integral and infinite sum in \eqref{integralrealline} can exchange order which gives
	\begin{equation*}
		\mathcal I_0(-x', y-y')=\sum\limits_{n=0}^{\infty}\frac{(y'-\ri x')^n}{2\pi n!}\int_{0}^{\infty}\frac{e^{-\lambda}\lambda^n}{y^n(\lambda-yZ_{\varepsilon})}d\lambda=\sum\limits_{n=0}^{\infty}\ri^{-n}(x'+\ri y')^n\mathcal I_n(0, y).
	\end{equation*}

	For the case $x< 0$, the assumption $y, Z, \varepsilon >0$, we have inequality
	\begin{equation}
		\frac{\mathfrak{Im}(P)}{\mathfrak{Re}(P)}=\frac{-x+\frac{\varepsilon y}{Z}}{y+\frac{\varepsilon x}{Z}}>\frac{-x}{y},\quad {\rm if}\;\;y+\frac{\varepsilon x}{Z}>0.
	\end{equation}
	Then, the scenario is that either $\mathfrak{Re}(P)<0$ or $\mathfrak{Im}(P)>-x\mathfrak{Re}(P)/y$, i.e., $S$ is in the first quadrant and the point given by $P$ is located above the contour $S$ or in the left complex plane, see Fig. \ref{f1.4} (a).
	Therefore, the Cauchy's theorem can be applied to change the contour from $S_R:=\{t=(y-\ri x)\lambda: \lambda\in[0, R]\}$ to $\Gamma_{R,1}\cup \Gamma_{R, 2}^+$, where
	\begin{align}
		\Gamma_{R,1}=\left\{t:0\le t \le R\right\},\quad
		\Gamma_{R,2}^+=\left\{t=Re^{\ri \theta} :0\le \theta \le \theta_{xy}:=\arctan \Big(-\frac{x}{y}\Big)\right\}.  \label{contourpositive}
	\end{align}

	To discuss the integral along $\Gamma_{R, 2}^+$, we let $t=Re^{\ri\theta}$ in the exponential functions of the integrand of \eqref{integralalongS} and then take modulus that
	\begin{equation*}
		\begin{split}
			|e^{-t}e^{{(y'-\ri x')t}/{(y-\ri x)}}|=&\big|e^{-Re^{\ri \theta}\left(1-(y'-\ri x')/(y-\ri x)\right)}\big|
			=\Big|e^{-Re^{\ri \theta}\left[1-\frac{yy'+xx'+\ri(xy'-x'y)}{|\bs r|^2}\right]}\Big|\\
			=&e^{-R\left[(1-\frac{yy'+xx'}{|\bs r|^2})\cos\theta+\frac{xy'-x'y}{|\bs r|^2}\sin\theta\right]}.
		\end{split}
	\end{equation*}
	Note that $|\bs r|>|\bs r'|$, $0\leq\theta_{xy}<\frac{\pi}{2}$ for $x< 0, y>0$. Therefore, we have estimate 
	\begin{equation*}
		(1-\frac{yy'+xx'}{|\bs r|^2})\cos\theta+\frac{xy'-x'y}{|\bs r|^2}\sin\theta\geq (1-\frac{yy'+xx'}{|\bs r|^2})\cos\theta_{xy},\quad\forall \theta\in [0, \theta_{xy}]
	\end{equation*}
	for $xy'-x'y\geq 0$, and 
	\begin{equation*}
		\begin{split}
			(1-\frac{yy'+xx'}{|\bs r|^2})\cos\theta+\frac{xy'-x'y}{|\bs r|^2}\sin\theta\geq& (1-\frac{yy'+xx'}{|\bs r|^2})\cos\theta_{xy}+\frac{xy'-x'y}{|\bs r|^2}\sin\theta_{xy}\\
			=&\frac{y}{|\bs r|}-\frac{y^2y'+xx'y}{|\bs r|^3}+\frac{xx'y-x^2y'}{|\bs r|^3}=\frac{y-y'}{|\bs r|},
		\end{split}
	\end{equation*}
	for $xy'-x'y<0$ and $\theta\in [0,\theta_{xy}]$. Consequently, we obtain
	\begin{equation*}
		|e^{-t}e^{{(y'-\ri x')t}/{(y-\ri x)}}|\leq   \max\Big\{e^{-\frac{Ry}{|\bs r|}(1-\frac{yy'+xx'}{|\bs r|^2})},e^{-\frac{R(y-y')}{|\bs r|}}\Big\} ,\quad \forall t\in\Gamma_{R,2}^+,
	\end{equation*}
	which further implies that
	\begin{equation}\label{integralvanish}
		\begin{split}
			&\lim\limits_{R\rightarrow+\infty}\left|\frac{1}{2\pi}\int_{\Gamma_{R,2}^+}\frac{e^{-t}e^{{(y'-\ri x')t}/{(y-\ri x)}}}{t-P}dt\right|\\
			\le&\frac{1}{2\pi}\lim\limits_{R\rightarrow+\infty}\max\{e^{-\frac{Ry}{|\bs r|}(1-\frac{yy'+xx'}{|\bs r|^2})},e^{-\frac{R(y-y')}{|\bs r|}}\}  \int_{0}^{\theta_{xy}}\frac{1}{\left|e^{\ri \theta}-P/R\right|}d\theta=0.
		\end{split}
	\end{equation}
	
	By Cauchy's theorem and the power series of the exponential function, we obtain
	\begin{equation}\label{ee1.1}
		\mathcal I_0(x-x', y-y')
		=\frac{1}{2\pi}\int_{0}^{+\infty}\frac{e^{\frac{(y'-\ri x')t}{(y-\ri x)}-t}}{t-P}dt=\frac{1}{2\pi}\int_{0}^{+\infty}\sum\limits_{n=0}^{\infty}\frac{(y'-\ri x')^n}{n!(y-\ri x)^{n}}\frac{e^{-t}t^{n}}{t-P}dt,
	\end{equation}
	where $\mathfrak{Im}(P)=\varepsilon y-Zx>0$. Note that the assumption $|\bs r|>|\bs r'|$ implies
	\begin{equation*}
		\begin{split}
			&\sum\limits_{n=0}^{\infty}\int_{0}^{+\infty}\Big|\frac{(y'-\ri x')^n}{n!(y-\ri x)^{n}}\frac{e^{-t}t^{n}}{t-P}\Big|dt
			\leq \sum\limits_{n=0}^{\infty}\frac{|\bs r'|^n}{n!|\bs r|^{n}}\int_{0}^{+\infty}\frac{e^{-t}t^{n}}{|t-P|}dt\leq \frac{1}{\mathfrak{Im}(P)}\sum\limits_{n=0}^{\infty}\frac{|\bs r'|^n}{|\bs r|^{n}}<\infty.
		\end{split}
	\end{equation*}
	Use Fubini's theorem to exchange the order of the summation and improper integral in \eqref{ee1.1}, we get 
	\begin{equation}\label{ooo1.0}
		\mathcal I_0(x-x', y-y')
		= \sum\limits_{n=0}^{\infty}\frac{1}{2\pi}\frac{(y'-\ri x')^n}{n!(y-\ri x)^{n}}\int_{0}^{+\infty}\frac{e^{-t}t^n}{t-P}dt.
	\end{equation}
	As
	\begin{align}
		\lim\limits_{R\rightarrow+\infty}\Big|\int_{\Gamma_{R,2}^+}\frac{e^{-t}t^{n}}{t-P}dt\Big|
		=&\lim\limits_{R\rightarrow+\infty}\Big|\int_{0}^{\theta_{xy}}\frac{e^{-Re^{\ri \theta}}(Re^{\ri \theta})^nRe^{\ri \theta}\ri}{Re^{\ri \theta}-P}d\theta\Big|\\
		\leq&\lim\limits_{R\rightarrow+\infty}e^{-\frac{Ry}{|\bs r|}}R^{n}\int_{0}^{\theta_{xy}}\frac{1}{|e^{\ri\theta}-P/R|} d\theta=0,\nonumber
	\end{align}
	we can change the contour in each term of \eqref{ooo1.0} back to $S$ to obtain
	\begin{equation}\label{changecontourback}
		\begin{split}
			\mathcal I_0(x-x', y-y')
			=& \sum\limits_{n=0}^{\infty}\frac{1}{2\pi}\int_{S}\frac{(y'-\ri x')^n}{n!(y-\ri x)^{n}}\frac{e^{-t}t^n}{t-P}dt\\
			=& \sum\limits_{n=0}^{\infty}\frac{(y'-\ri x')^n}{2\pi n!}\int_{0}^{+\infty}\frac{e^{-(y-\ri x)\lambda}\lambda^n}{\lambda-Z_{\varepsilon}}d\lambda.
		\end{split}
	\end{equation}
	
	Next, we discuss the case $x>0$. In this case, we always have $\mathfrak{Re}(P)=Zy+\varepsilon x>0$. However, $\mathfrak{Im}(P)=-Zx+y\varepsilon$ can be any number in $\mathbb R$. For the case $\mathfrak{Im}(P)>0$, as shown in Fig. \ref{f1.4} (b), the proof is analogous to that of the case $x<0$ and will be omitted here for brevity.
	
	If $x>0,\mathfrak{Im}(P)=0$, the contour is sketched in Fig. \ref{f1.4} (c). Following the proof for \eqref{integralvanish}, we can verify that the integral along the contour 
	\begin{equation}\label{contournegative}
		\Gamma_{R, 2}^-=\left\{t=Re^{\ri \theta} :\theta_{xy}:=\arctan \Big(-\frac{x}{y}\Big)\le \theta \le 0\right\}
	\end{equation}
	tends to $0$ as $R\rightarrow+\infty$.
	Therefore, the Cauchy theorem and the power series of the exponential function give
	\begin{equation}\label{o1.11}
		\begin{split}
			&\frac{1}{2\pi}\int_{S}\frac{e^{-t}e^{{(y'-\ri x')t}/{(y-\ri x)}}}{t-P}dt\\
			=&\frac{1}{2\pi}\int_{\Gamma_{\delta, 1}}\sum\limits_{n=0}^{\infty}\frac{(y'-\ri x')^n}{n!(y-\ri x)^{n}}\frac{e^{-t}t^{n}}{t-P}dt+\frac{1}{2\pi}\int_{\Gamma_{\delta}}\sum\limits_{n=0}^{\infty}\frac{(y'-\ri x')^n}{n!(y-\ri x)^{n}}\frac{e^{-t}t^{n}}{t-P}dt,
		\end{split}
	\end{equation}
	where
	\begin{equation}\label{o1.12}
		\Gamma_{\delta,1}=[0, P-\delta]\cup [P+\delta,+\infty),\quad
		\Gamma_{\delta }=\left\{t=\delta e^{\ri \theta}+\mathfrak{Re}(P) :-\pi\le \theta \le 0 \right\},
	\end{equation}
	and $\delta>0$ is a small positive number.
	For fixed $\delta>0$, direct calculation leads to
	\begin{equation}\label{ee1.3}
		\sum\limits_{n=0}^{\infty}\frac{1}{2\pi}\int_{\Gamma_{\delta,1}}\Big|\frac{(y'-\ri x')^n}{n!(y-\ri x)^{n}}\frac{e^{-t}t^{n}}{t-P}\Big|dt\le\frac{|\bs r|}{2\pi \delta (|\bs r|-|\bs r'|)}.
	\end{equation}
	The integral along contour $\Gamma_{\delta}$ can reformulated as
	\begin{equation*}
		\begin{split}
			&\frac{1}{2\pi}\int_{\Gamma_{\delta}}\sum\limits_{n=0}^{\infty}\frac{(y'-\ri x')^n}{n!(y-\ri x)^{n}}\frac{e^{-t}t^{n}}{t-P}dt
			=\frac{1}{2\pi}\int_{-\pi}^{0}\sum\limits_{n=0}^{\infty}\frac{(y'-\ri x')^n}{n!(y-\ri x)^{n}}e^{-(\delta e^{\ri \theta}+P)}(\delta e^{\ri \theta}+P)^n\ri d\theta.
		\end{split}
	\end{equation*}
	Together with the estimate
	\begin{equation}
		\int_{-\pi}^{0}|e^{-(\delta e^{\ri \theta}+P)}(\delta e^{\ri \theta}+P)^n\ri |d\theta\leq \int_{-\pi}^{0}|e^{-(\delta e^{\ri \theta}+P)}|n!e^{|\delta e^{\ri \theta}+P |}d\theta\leq n!\pi e^{2\delta}
	\end{equation}
	gives
	\begin{equation}\label{absoluteconvergence1}
		\sum\limits_{n=0}^{\infty}\frac{1}{2\pi}\int_{-\pi}^{0}\Big|\frac{(y'-\ri x')^n}{n!(y-\ri x)^{n}}\frac{e^{-t}t^{n}}{t-P}\Big|d\theta\le\frac{e^{2\delta}|\bs r|}{2(|\bs r|-|\bs r'|)}.
	\end{equation}
	The convergence in \eqref{ee1.3} and \eqref{absoluteconvergence1} shows that we can apply Fubini's theorem in \eqref{o1.11} to exchange the order of the summation and integral. Then, following the proof in \eqref{ooo1.0}-\eqref{changecontourback} to change the contour of each integral term back to $S$ gives the conclusion \eqref{reactfieldME}. 
	
	If $x>0,\mathfrak{Im}(P)<0$, $P$ is located within the region enclosed by $S_
	R\cup\Gamma_{R, 1}\cup\Gamma_{R,2}^-$, as sketched in  Fig. \ref{f1.4} (d). Here, $\Gamma_{R,1}$ and $\Gamma_{R,2}^-$ are defined in \eqref{contourpositive} and \eqref{contournegative}, respectively. Also, we can verify that integral along $\Gamma_{R,2}^-$ tends to $0$ as $R\rightarrow+\infty$ and the contour \eqref{integralalongS} can be changed from $S_R$
	to $\Gamma_{R,1}\cup C_{\delta}$, i.e.,
	\begin{equation}\label{casethree}
		\begin{split}
			&\frac{1}{2\pi}\int_{S_R}\frac{e^{-t}e^{{(y'-\ri x')t}/{(y-\ri x)}}}{t-P}dt\\
			=&\frac{1}{2\pi}\int_{\Gamma_{R, 1}}\sum\limits_{n=0}^{\infty}\frac{(y'-\ri x')^n}{n!(y-\ri x)^{n}}\frac{e^{-t}t^{n}}{t-P}dt+\frac{1}{2\pi}\int_{C_{\delta}}\sum\limits_{n=0}^{\infty}\frac{(y'-\ri x')^n}{n!(y-\ri x)^{n}}\frac{e^{-t}t^{n}}{t-P}dt,
		\end{split}
	\end{equation}
	where
	\begin{equation}\label{o1.16}
		C_{\delta}=\left\{t=\delta e^{\ri \theta}+P :0\le \theta \le 2\pi \right\},
	\end{equation}
	and $\delta>0$ is a small number. 
	The proof in \eqref{ooo1.0}-\eqref{ee1.1} shows that
	\begin{equation}\label{ooo1.3}
		\lim\limits_{R\rightarrow+\infty}\frac{1}{2\pi}\int_{\Gamma_{R, 1}}\sum\limits_{n=0}^{\infty}\frac{(y'-\ri x')^n}{n!(y-\ri x)^{n}}\frac{e^{-t}t^n}{t-P}dt= \sum\limits_{n=0}^{\infty}\frac{1}{2\pi}\int_{0}^{+\infty}\frac{(y'-\ri x')^n}{n!(y-\ri x)^{n}}\frac{e^{-t}t^n}{t-P}dt.
	\end{equation}
	Moreover, mimic the proof of \eqref{absoluteconvergence1} gives
	\begin{equation}\label{ooo1.4}
		\frac{1}{2\pi}\int_{C_{\delta}}\sum\limits_{n=0}^{\infty}\frac{(y'-\ri x')^n}{n!(y-\ri x)^{n}}\frac{e^{-t}t^n}{t-P}dt= \sum\limits_{n=0}^{\infty}\frac{1}{2\pi}\int_{C_{\delta}}\frac{(y'-\ri x')^n}{n!(y-\ri x)^{n}}\frac{e^{-t}t^n}{t-P}dt.
	\end{equation}
	Let $R\rightarrow+\infty$ and using \eqref{ooo1.3} and \eqref{ooo1.4} in \eqref{casethree}, we obtain
	\begin{equation}
		\frac{1}{2\pi}\int_{S}\frac{e^{-t}e^{{(y'-\ri x')t}/{(y-\ri x)}}}{t-P}dt
		=\sum\limits_{n=0}^{\infty}\left[ \frac{1}{2\pi}\lim\limits_{R\rightarrow+\infty}\int_{\Gamma_{R,1}\cup C_{\delta}}\frac{(y'-\ri x')^n}{n!(y-\ri x)^{n}}\frac{e^{-t}t^n}{t-P}dt\right]
	\end{equation}
	Changing the contour of each integral term back to $S$ shows that conclusion \eqref{reactfieldME} also holds in this case.
\end{proof}

\section*{Appendix B. Proof of Lemma 3.1}
\begin{proof}
	If $a\leq 0$, the definition of Gamma function $\Gamma(x)$ directly gives
	\begin{equation}\label{f1.73}
		\begin{split}
			\left| \int_{0}^{+\infty}\frac{e^{-t}t^{n}}{t-z}dt \right|
			\le&\int_{0}^{+\infty}\frac{e^{-t}t^{n}}{\sqrt{(t-a)^2+b^2}}dt
			\le\int_{0}^{+\infty}e^{-t}t^{n-1}dt=(n-1)!.
		\end{split}
	\end{equation}
	For any $a>0$, we first consider the case $b> 0$. By the equality
	\begin{equation}
		\frac{t^n}{t-z}=t^{n-1}+z\frac{t^{n-1}}{t-z}=\cdots=\sum\limits_{k=1}^{n}z^{n-k}t^{k-1}+\frac{z^n}{t-z},
	\end{equation} 
	we can arrive at an estimate as follows
	\begin{equation}
		\begin{split}
			\label{integralestcase1}
			\int_{0}^{+\infty}\frac{e^{-t}t^{n}}{t-z}dt 
			=&\int_{0}^{+\infty}e^{-t}\left(\sum\limits_{k=1}^{n}z^{n-k}t^{k-1}+\frac{z^n}{t-z} \right)dt\\
			=&\sum\limits_{k=1}^{n}z^{n-k}(k-1)!+z^{n}\int_{0}^{+\infty}\frac{e^{-t}}{t-z}dt  \\
			\le&\sum\limits_{k=1}^{n}|z|^{n-k}(k-1)!
			+|z|^{n}\left|\int_{0}^{+\infty}\frac{e^{-t}}{t-z}dt\right|.
		\end{split}
	\end{equation}
	To give an estimate to the simplified integral, we split it into two integrals
	\begin{equation}\label{integraldecompose}
		\int_{0}^{+\infty}\frac{e^{-t}}{t-z}dt  =\int_{0}^{a+1}\frac{e^{-t}}{t-z}dt+\int_{a+1}^{+\infty}\frac{e^{-t}}{t-z}dt. 
	\end{equation}
	The second integral has estimate
	\begin{equation}\label{f1.717}
		\begin{split}
			\left|\int_{a+1}^{+\infty}\frac{e^{-t}}{t-z}dt\right|\le&\int_{a+1}^{+\infty}\frac{e^{-t}}{|t-z|}dt
			\le \int_{a+1}^{+\infty}e^{-t}dt \le1.
		\end{split}
	\end{equation}
	For the first integral, we change the contour to $\Gamma_1\cup\Gamma_2\cup \Gamma_3$ where
	\begin{equation}\label{f1.712}
		\begin{split}
			&\Gamma_{1}=\left\{t=\ri \eta: -1\le \eta \le 0 \right\},\quad \Gamma_{2}=\left\{t=\eta-\ri: 0\le \eta \le  a+1\right\},\\
			&\Gamma_{3}=\left\{t= a+1+\ri\eta: -1\le \eta \le 0\right\}.
		\end{split}
	\end{equation}
	As $a>0, b>0$, there holds the following estimates
	\begin{equation}\label{f1.713}
		\begin{split}
			\left| \int_{\Gamma_1}\frac{e^{-t}}{t-z}dt\right| =&\left|\int_{0}^{1} \frac{\ri e^{\ri \eta}}{\ri \eta+z}d\eta\right|
			\le\int_{0}^{1} \frac{1}{\sqrt{a+(\eta+b)^2}}d\eta\le\frac{1}{|z|}.\\
			\left| \int_{\Gamma_2}\frac{e^{-t}}{t-z}dt\right|=&\left|\int_{0}^{a+1} \frac{e^{\ri} e^{- \eta}}{ \eta-\ri-P}d\eta\right| 
			\le\int_{0}^{a+1} \frac{1}{\sqrt{(\eta-a)^2+(b+1)^2}}d\eta\\
			=&\ln{\frac{a+\sqrt{a^2+(b+1)^2}}{-1+\sqrt{1+(b+1)^2}}}
			\le 3|z|+1,\\
			\left| \int_{\Gamma_3}\frac{e^{-t}}{t-z}dt\right|=&\left|\ri e^{-(a+1)}\int_{-1}^{0} \frac{ e^{-\ri \eta}}{ 1+\ri(\eta-b)}d\eta\right|
			\le e^{-1}\int_{0}^{1} \frac{1}{ \sqrt{1+(\eta+b)^2}}d\eta\\
			\le&e^{-1}\int_{0}^{1} \frac{1}{ \sqrt{1+\eta^2}}d\eta=\frac{\ln{(1+\sqrt{2})}}{e}\le1.
		\end{split}
	\end{equation}
	\begin{figure}[h!]
		\centering
		\includegraphics[width=0.4\textwidth]{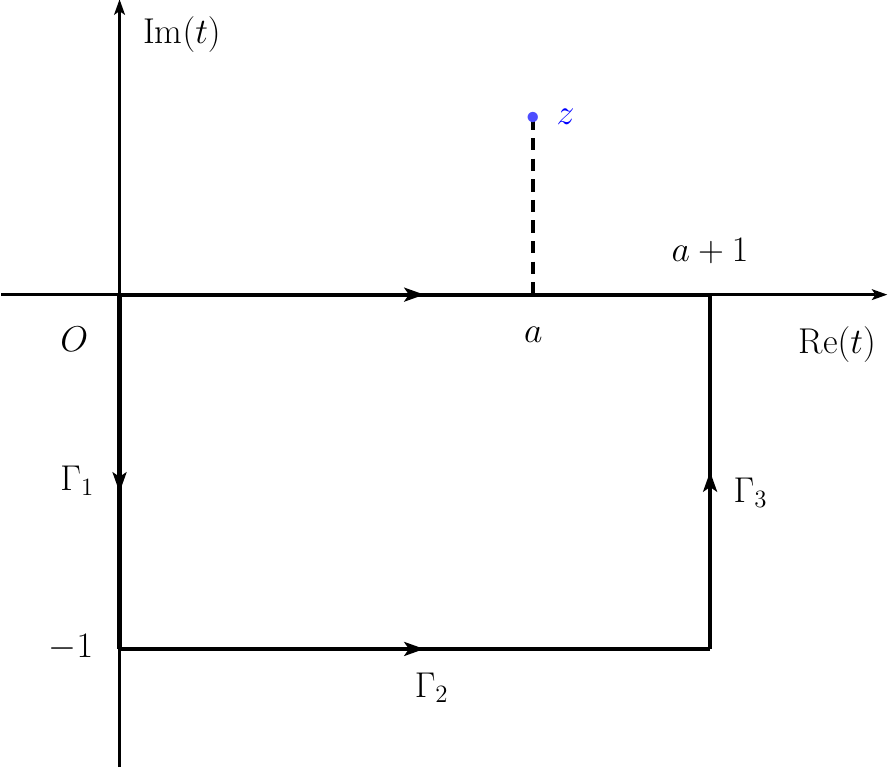}\qquad
		\includegraphics[width=0.4\textwidth]{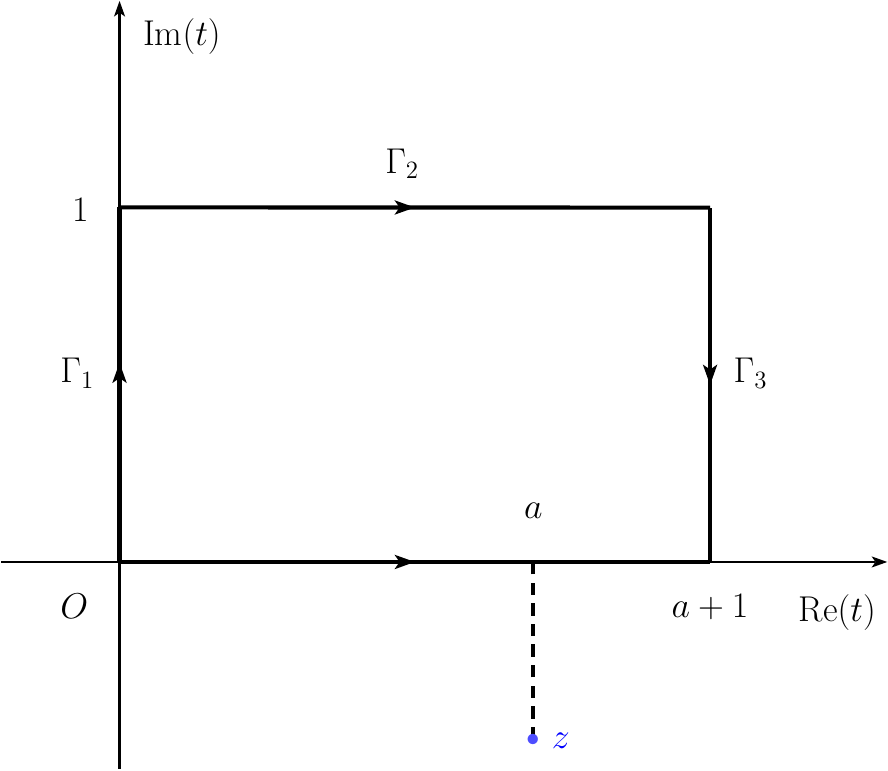}
		\caption{The integration path for $b>0$(left) and $b<0$(right)}
		\label{f1.718}
	\end{figure}
	Together with \eqref{integraldecompose} and \eqref{f1.717}, we obtain
	\begin{equation}
		\label{f1.716}	\left|\int_{0}^{+\infty}\frac{e^{-t}}{t-z}dt\right|\le\frac{1}{|z|}+3|z|+3.
	\end{equation}
	
	According to Stirling's formula \cite{Stirling1955}
	\begin{eqnarray}
		\nonumber n!=\sqrt{2\pi n}\Big(\frac{n}{e}\Big)^ne^{\frac{\theta_n}{12n}},  \quad 0<\theta_n<1,
	\end{eqnarray}
	for all $n \geq |z|e$, we can have
	\begin{eqnarray}
		\label{f1.71}n!\ge \sqrt{2\pi n}\Big(\frac{n}{e}\Big)^n\ge |z|^n ,
	\end{eqnarray}
	which further implies
	\begin{equation}\label{ppowerest}
		\sum\limits_{k=1}^{n}|z|^{n-k}(k-1)!\leq \sum\limits_{k=1}^{n}(n-k)!(k-1)!\leq 3(n-1)!.
	\end{equation}
	Substituting the above estimates \eqref{f1.716} and \eqref{ppowerest} into \eqref{integralestcase1}, we have 
	\begin{equation}
		\begin{split}
			\Big|\int_{0}^{+\infty}\frac{e^{-t}t^{n}}{t-z}dt \Big|
			\leq & 3(n-1)!+(n-1)!(1+3|z|^2+3|z|)\\
			\leq & (n-1)!(3|z|^2+3|z|+4).
		\end{split}
	\end{equation}
	Therefore, we finish the proof for the case $a, b> 0$. For the proof of the case $a>0$, $b<0$, we can just change the contour to $\Gamma_1\cup\Gamma_2\cup \Gamma_3$ as depicted in Fig. \ref{f1.718} (right) and then mimic the proof above. 
	
	Now, we consider the case $a>0$, $b=0$. In this case, the integrand has a pole at $t=z$ and the integral is defined along the contour $\Gamma_{\delta,1}\cup \Gamma_{\delta}$
	where
	\begin{equation}\label{f1.12}
		\Gamma_{\delta,1}=[0, a-\delta]\cup [a+\delta,+\infty),\quad
		\Gamma_{\delta}=\left\{t=\delta e^{\ri \theta}+a :-\pi\le \theta \le 0 \right\},
	\end{equation}
	see Fig. \ref{f1.4} (c) for an illustration. For the integral along $\Gamma_{\delta,1}$, we have
	\begin{equation}\label{f1.13} 
		\int_{\Gamma_{\delta,1}}\frac{e^{-t}t^{n}}{t-a}dt
		=\sum\limits_{k=1}^na^{n-k}(k-1)!+ a^n{\rm p.v.}\int_{0}^{+\infty}\frac{e^{-t}}{t-a}dt
	\end{equation}
	and
	\begin{equation}
		\begin{split}
			\Big|{\rm p.v.}\int_{0}^{+\infty}\frac{e^{-t}}{t-a}dt\Big|=e^{-a}\Big|{\rm p.v.}\int_{-a}^{a}\frac{e^{-\xi}}{\xi}d\xi+\int_{a}^{+\infty}\frac{e^{-\xi}}{\xi}d\xi\Big|\leq 2+\frac{1}{a}.
		\end{split}
	\end{equation}
	Then, apply the Stirling formula again, we obtain
	\begin{equation}\label{m1.0}
		\Big|\int_{\Gamma_{\delta,1}}\frac{e^{-t}t^{n}}{t-a}dt\Big|\leq 4(n-1)!+2a(n-1)!,
	\end{equation}
	for all $n\geq |z|e$. 
	By direct calculation leads to
	\begin{equation}\label{f1.14}
		\Big|\int_{\Gamma_{\delta}}\frac{e^{-t}t^{n}}{t-z}dt\Big|
		=\Big|\lim\limits_{\delta \rightarrow0^+}\int_{-\pi}^{0}e^{-(\delta e^{\ri\theta}+a)}(\delta e^{\ri\theta}+a)^{n}\ri d\theta\Big|
		=\pi e^{-a}a^n\le \pi a(n-1)!.
	\end{equation}
	Combining the above results\eqref{m1.0}-\eqref{f1.14} yields that
	\begin{equation}
		\begin{split}
			\Big|\int_{0}^{+\infty}\frac{e^{-t}t^{n}}{t-z}dt \Big|
			\leq & (\pi a+2a+4)(n-1)!.
		\end{split}
	\end{equation}
	This completes the proof of Lemma \ref{lemma1}.
\end{proof}

\begin{acknowledgements}
The author acknowledges the support of Clements Chair for this research.
This research is supported in part by funds from the Postgraduate Scientific Research Innovation Project of Hunan Province (No. CX20230512), NSFC (grant 12022104, 12371394, 12201603) and the Major Program of Xiangjiang Laboratory (No.22XJ01013). 
The authors would like to express their most sincere thanks to the referees and editors for their very helpful comments and suggestions, which greatly improved the quality of this paper.
\end{acknowledgements}

\end{document}